\documentclass[12pt]{article}
\usepackage{amsmath}
\usepackage{float}
\usepackage{amsthm}
\usepackage{amssymb}
\usepackage[utf8x]{inputenc}
\usepackage{caption}
\usepackage{graphics}
\usepackage{enumerate} 
\usepackage{algorithmic}
\usepackage{calrsfs}
\DeclareMathAlphabet{\pazocal}{OMS}{zplm}{m}{n}

\usepackage[Algorithm,ruled]{algorithm}
\usepackage{titlesec}
\usepackage{sectsty}
\titleformat*{\section}{\LARGE\bfseries}
\titleformat*{\subsection}{\Large\bfseries}
\titleformat*{\subsubsection}{\large\bfseries}
\sectionfont{\Huge}
\subsectionfont{\large}
\subsubsectionfont{\normalsize}
\usepackage{multicol}
\usepackage{lmodern}
\usepackage{marvosym} 
\usepackage{lipsum}
\usepackage{mwe}
\usepackage{caption}
\usepackage{subcaption} 
\newtheoremstyle{case}{}{}{}{}{}{:}{ }{}
\theoremstyle{case}

\newcommand{\be}{\begin{equation}}
\newcommand{\ee}{\end{equation}}
\newcommand{\ben}{\begin{eqnarray*}}
\newcommand{\een}{\end{eqnarray*}}
\newtheorem{examp}{\sc example}
\newtheorem{remk}{\sc remark}
\newtheorem{corol}{\sc corollary}
\newtheorem{lemma}{\sc lemma}
\newtheorem{theorem}{\sc theorem}
\newtheorem{defn}{\sc definition}
\newtheorem{prop}{\sc proposition}
\newcommand{\bt}{\begin{theorem}}
\newcommand{\et}{\end{theorem}}
\newcommand{\bl}{\begin{lemma}}
\newcommand{\el}{\end{lemma}}
\newcommand{\bed}{\begin{defn}}
\newcommand{\eed}{\end{defn}}
\newcommand{\brem}{\begin{remk}}
\newcommand{\erem}{\end{remk}}
\newcommand{\bex}{\begin{examp}}
\newcommand{\eex}{\end{examp}}
\newcommand{\bcl}{\begin{corol}}
\newcommand{\ecl}{\end{corol}}

\topmargin -0.3cm \evensidemargin -0.2cm \oddsidemargin -0.2cm
\textheight 9in \textwidth 6in

\newcommand{\NI}{\noindent}

%\renewcommand{\thefootnote}{\fnsymbol{footnote}}
%\def \qed {\hfill \vrule height6pt width 6pt depth 0pt}
%\theoremstyle{plain}
%\newtheorem{proposition}{Proposition}[section]
%\newtheorem{defn}{Definition}[section]
%\newtheorem{thm}{theorem}[section]
%\newtheorem{lem}[thm]{lemma}
%\numberwithin{theorem}{section}
%\numberwithin{lemma}{section}

%\newtheorem{prop}[thm]{Proposition}
\theoremstyle{definition}
\theoremstyle{remark}

\numberwithin{equation}{section}
\numberwithin{theorem}{section}
\numberwithin{lemma}{section}

%\numberwithin{Definition}{section}

%\usepackage{multicol}
%\usepackage{lmodern}
%\usepackage{marvosym} 
%\usepackage{lipsum}
%\usepackage{mwe}
%\usepackage{caption}
%\usepackage{subcaption} 

%\newtheorem{theorem}{Theorem}[section]
%\newtheorem{proposition}{Proposition}[section]
%\newtheorem{lemma}{Lemma}[section]

\begin{document}

\title{\large\bf\sc Necessary and sufficient conditions for a subclass of $P$-tensor}

\author{ 
R. Deb$^{a,1}$ and A. K. Das$^{b,2}$\\
\emph{\small $^{a}$Jadavpur University, Kolkata , 700 032, India.}\\	
\emph{\small $^{b}$Indian Statistical Institute, 203 B. T.
	Road, Kolkata, 700 108, India.}\\
%\emph{\small $^{1}$Email: rwitamjanaju@gmail.com}\\
%\emph{\small $^{1}$Email: aritradutta001@gmail.com}\\
\emph{\small $^{1}$Email: rony.knc.ju@gmail.com}\\
\emph{\small $^{2}$Email: akdas@isical.ac.in}\\
}

\date{}

\maketitle

\begin{abstract}
\NI In this article, we introduce the class $B$-Nekrasov tensor in the context of tensor complementarity problem. We study some tensor theoretic properties. We show that the class of B-Nekrasov tensor contains the class of Nekrasov $Z$-tensor with positive diagonal entries. We present a necessary and sufficient condition for a $B$-Nekrasov tensor. We show that the class of $P$-tensor contains the class of $B$-Nekrasov tensor.\\

\noindent{\bf Keywords:} Tensor complementarity problem, $P$-tensors, Nekrasov tensors, Nekrasov $Z$-tensor, $B$-Nekrasov tensors, Nonsingular $H$-tensor, Diagonally dominant tensor.
\\

\noindent{\bf AMS subject classifications:} 
\end{abstract}
\footnotetext[1]{Corresponding author}

\section{Introduction}

\NI Given a positive integer $n,$ the set $\{1,2,...,n\}$ is denoted by $[n].$ A tensor of order $m$ and dimension $n$, $\mathcal{A}= (a_{i_1 i_2... i_m}) $ is a multidimensional array of entries $a_{i_1 i_2... i_m} \in \mathbb{C}$ where $i_l \in [n]$ with $l\in [m].$ The set of complex (real) tensors of order $m$ and dimension $n$ is denoted by $\mathbb{C}^{[m,n]} (\mathbb{R}^{[m,n]}).$ If under any permutation of the indices $i_1, i_2, ... ,i_m$ the entries $a_{i_1 i_2... i_m}$ are invariant then the tensor $\mathcal{A}= (a_{i_1 i_2... i_m}) \in \mathbb{C}^{[m,n]}$ is called a symmetric tensor. Tensors have applications in different areas of science and engineering. The common applications are in physics, quantum computing, spectral hypergraph theory, diffusion tensor imaging, image authenticity verification problem, optimization theory and many other areas. In optimization theory, the tensor complementarity problem (TCP) introduced by Song and Qi \cite{song2017properties}, is a subclass of the nonlinear complementarity problem, where the involved functions are special polynomials defined by a tensor.

\NI Given a real tensor $\mathcal{A}$ of order $m$ and dimension $n$ and an real vector $q\in\mathbb{R}^n$ the tensor complementarity problem is to look for $v\in \mathbb{R}^n $ such that  satisfying
	\begin{equation}\label{tensor comp equation}
	v\geq 0,\;\;  \mathcal{A}v^{m-1}+q \geq 0,\;\;  v^T (\mathcal{A}v^{m-1}+q)  = 0.
	\end{equation}
This problem is denoted by TCP$(\mathcal{A}, q)$ and the solution set of TCP$(\mathcal{A}, q)$ is denoted by SOL$(\mathcal{A}, q).$
The tensor complementarity problem arises in optimization theory, game theory and in other areas. A class of multi-person noncooperative game \cite{huang2017formulating}, hypergraph clustering problem and traffic equilibrium problem \cite{huang2019tensor} can be formulated as the tensor complementarity problem.

\NI The tensor complementarity problem can be viewed as one type of extension of the linear complementarity problem where the homogeneous polynomials involved in the function are constructed with the help of tensor.
\NI Given a real square matrix $A$ of order $n$ and an vector $q\in \mathbb{R}^n$, the linear complementarity problem \cite{cottle2009linear} is to find $z$ satisfying
\begin{equation}\label{linear comp equation}
 z\geq 0,\;\;  Az + q \geq 0,\;\; z^T (Az + q) = 0.
\end{equation}
The problem is denoted by LCP$(q,A)$ and the solution set of LCP$(q,A)$ is denoted by SOL$(q,A).$ It is important that large number of formulations not only enrich the linear complementarity problem but also generate different matrix classes along with their computational methods.
 For details see \cite{das2019some}, \cite{neogy2006some}, \cite{dutta2021some} \cite{dutta2022on}, \cite{neogy2013weak}, \cite{jana2018semimonotone}, \cite{neogy2005almost}, \cite{neogy2011singular}, \cite{jana2019hidden}, \cite{jana2021more}, \cite{neogy2009modeling}, \cite{das2017finiteness}, \cite{das2018invex}. For details of game theory see \cite{mondal2016discounted}, \cite{neogy2008mathematical}, \cite{neogy2008mixture}, \cite{neogy2005linear}, \cite{neogy2016optimization}, \cite{das2016generalized}, \cite{neogy2011generalized} and for details of QMOP see \cite{mohan2004note}. Even matrix classes arise during the study of Lemke's algorithm as well as principal pivot transform. For details see \cite{mohan2001classes}, \cite{mohan2001more} \cite{neogy2005principal}, \cite{das2016properties}, \cite{neogy2012generalized}, \cite{jana2019hidden}, \cite{jana2021iterative}, \cite{jana2021more}, \cite{jana2018processability}. 

\NI Orea and Pe$\tilde{n}$a \cite{orera2019accurate} introduced Nekrasov $Z$-matrix and studied the methods to compute its inverse with high relative accuracy. Esnaola and Pe$\tilde{n}$a \cite{garcia2016b} introduced $B$-Nekrasov matrix and studied the error bounds for the linear complementarity problem involving a $B$-Nekrasov matrix.

\NI In tensor complementarity theory, some special classes of matrices are extended to higher order structured tensors. In recent years many structured tensors are developed and studied well in context of TCP. The class of $P(P_0)$-tensor, $Z$-tensor, $M$-tensor, $H$-tensor, Nekrasov tensor and many other tensor classes are introduced and studied in the context of tensor complementarity problem. Zhang et al. \cite{zhang2014m} introduce $Z$-tensor as well as $M$-tensor in context of tensor complementarity problem and show that for a $Z$-tensor the tensor complementarity problem has least element property. $H$-tensor is introduced by Ding et al. \cite{ding2013m} and studied thoroughly in literature for its importance in tensor complementarity problem. %Song and Qi \cite{song2014properties} introduce $B$-tensor and study some tensor theoretic properties as well as properties related to tensor complementarity problem.
The Nekrasov tensors is introduced by Zhang and Bu \cite{zhang2018nekrasov}. They have proved that a Nekrasov tensor is a $H$-tensor. The positive definitene tensors have many important applications in multivariate network realizability analysis, automatic control, medical imaging and so on \cite{basser2002diffusion}, \cite{ni2008eigenvalue}, \cite{ qi2010higher}. Song and Qi \cite{song2014properties} introduce $P$-tensor. Severl properties of $P$-tensors are studied in context of tensor complementarity problem. For details see \cite{bai2016global}, \cite{ding2018p}, \cite{yuan2014some}. Qi and Luo \cite{qi2017tensor} has studied Several important results related to positive definite tensors. The positive definiteness of a homogeneous polynomial is equivalent to the positive definiteness of the symmetric tensor associated with it \cite{kannan2015some}. Since an even order real symmetric nonsingular $H$-tensor with positive diagonal entries is positive definite \cite{li2014criterions}, nonsingular H-tensors play an important role in checking the positive definiteness of even order real symmetric tensors. % Deb and Das \cite{} introduced Nekrasov $Z$-tensor and studied its properties.

\NI In this paper, we introduce $B$-Nekrasov tensor. We discuss necessary and sufficient condition for $B$-Nekrasov tensor. We prove that a $B$-Nekrasov tensor is a $P$-tensor.

The paper is organised as follows. Section 2 contains some basic notations and results. In Section 3, we introduce $B$-Nekrasov tensor. We investigate the connection between Nekrasov $Z$-tensor and $B$-Nekrasov tensor. We propose a necessary and sufficient condition for $B$-Nekrasov tensor. We establish its connection with $P$-tensor.

\section{Preliminaries}

Here are some basic notations used in this paper. For any positive integer $n,$ let $\mathbb{R}^n$ denote the $n$-dimensional Euclidean space. Any vector $v\in \mathbb{C}^n$ is a column vector and $v^T$ denotes the row transpose of $v.$ Here $\mathbb{R}^n_+ =\{ v\in \mathbb{R}^n : v\geq 0 \}$, $\mathbb{R}^n_{+ +} =\{ v\in \mathbb{R}^n : v> 0 \}$. A diagonal matrix $D=[d_{ij}]_{n \times n}=diag(d_1, \; d_2,\; ..., \; d_n)$ is defined as $d_{ij}=\left \{ \begin{array}{ll}
	  d_i  &;\; \forall \; i=j, \\
	  0  &; \; \forall \; i \neq j.
	   \end{array}  \right.$

\begin{defn}\cite{cvetkovic2009new,bailey1969bounds,kolotilina2015some, garcia2014error}
A matrix $A=(a_{ij})\in \mathbb{C}^{[2,n]},$ is said to be a Nekrasov matrix if $|a_{ii}|> \Lambda_i(A), \; \forall\; i\in [n],$ where
\begin{center}
    $\Lambda_i(A) =\left\{\begin{array}{ll}
        \sum_{j=2}^n |a_{ij}| &, i=1,  \\
        \sum_{j=1}^{i-1} |a_{ij}| \frac{\Lambda_j(A)}{|a_{jj}|} + \sum_{j=i+1}^n |a_{ij}| &, i=2,3,...,n.
    \end{array} \right.$
\end{center}
\end{defn}

\begin{defn}\cite{mangasarian1976linear}
A matrix $A\in \mathbb{R}^{[2,n]},$ is said to be a $Z$-matrix if all its offdiagonal entries are nonpositive.
\end{defn}
	   
%\begin{defn}\cite{orera2019accurate}
%A matrix $A\in \mathbb{R}^{[2,n]}$ is said to be Nekrasov $Z$-matrix, if $A$ is a Nekrasov mtrix as well as a $Z$-matrix.
%\end{defn}

\begin{defn}\cite{garcia2016b}
Given a real matrix $A(a_{ij})\in \mathbb{R}^{[2,n]},$ we can write $A$ as $A= B^+ + C,$ where
\begin{center}
    $B^+ = \left(\begin{array}{ccc}
        a_{11}-r_1^+ & ... & a_{1n}-r_1^+ \\
        \vdots & \vdots & \vdots \\
        a_{n1}-r_n^+ & ... & a_{nn}-r_n^+
    \end{array} \right)$ and 
    $C = \left(\begin{array}{ccc}
        r_1^+ & ... & r_1^+ \\
        \vdots & \vdots & \vdots \\
        r_n^+ & ... & r_n^+
    \end{array} \right)$
\end{center}
with $r_i^+ = \max \{0, a_{i,j} | j\neq i\}.$ The matrix $A$ is said to be a $B$-Nekrasov matrix if $B^+$ is a Nekrasov $Z$-matrix whose diagonal entries are all positive.
\end{defn}

\NI An identity tensor of order $m,$ $\mathcal{I}=(\delta_{i_1 i_2... i_m})\in \mathbb{C}^{[m,n]}$ is defined as follows:
$\delta_{i_1 i_2... i_m}= \left\{
\begin{array}{ll}
	  1  &:\; i_1= ...= i_m \\
	  0  &:\; else
	   \end{array}
 \right. .$
Let the zero tensor be denoted by $\mathcal{O},$ where each entry of $\mathcal{O}$ is zero. For $\mathcal{A}\in \mathbb{C}^{[m,n]}$ and $v\in \mathbb{C}^n,\; \mathcal{A}v^{m-1}\in \mathbb{C}^n $ is a vector defined by
\[ (\mathcal{A}v^{m-1})_i = \sum_{i_2, ..., i_m =1}^{n} a_{i i_2 ... i_r} v_{i_2}  \cdots v_{i_m} , \mbox{   for all } i \in [n] \]
and $\mathcal{A}v^m\in \mathbb{C} $ is a scalar defined by
\[ v^T\mathcal{A}v^{m-1}= \mathcal{A}v^m = \sum_{i_1, i_2, ..., i_m =1}^{n} a_{i_1 i_2 ...i_m} v_{i_1} v_{i_2} \cdots v_{i_m}. \]

\NI The general product of tensors was introduced by Shao \cite{shao2013general}. Let $\mathcal{A}$ and $\mathcal{B}$ be two $n$ dimensional tensor of order $p \geq 2$ and $r \geq 1,$ respectively. The product $\mathcal{A} \mathcal{B}$ is an $n$ dimensional tensor $\mathcal{C}$ of order $((p-1)(r-1)) + 1$ with entries 
\[c_{j \beta_1 \cdots \beta_{p-1} } =\sum_{j_2, \cdots ,j_p \in [n]} a_{j j_2 \cdots j_p} b_{j_2 \beta_1} \cdots b_{j_p \beta_{p-1}},\] where $j \in [n]$, $\beta_1, \cdots, \beta_{p-1} \in [n]^{r-1}$

\begin{defn}\cite{song2016properties}
Given $\mathcal{A} \in \mathbb{R}^{[m,n]} $ and $q\in \mathbb{R}^n$, a vector $v$ is said to be (strictly) feasible solution of TCP$(\mathcal{A},q),$ if $v \geq 0\; (>0)$ and $\mathcal{A}v^{m-1}+q \geq 0\; (>0)$.
\end{defn}

\begin{defn}\cite{song2016properties}
Given $\mathcal{A} \in \mathbb{R}^{[m,n]} $ and $q\in \mathbb{R}^n$, TCP$(\mathcal{A},q)$ is said to be (strictly) feasible if a (strictly) feasible vector exists.
\end{defn}

\begin{defn}\cite{song2016properties}
Given $\mathcal{A} \in \mathbb{R}^{[m,n]} $ and $q\in \mathbb{R}^n$, TCP$(\mathcal{A},q)$ is said to be solvable if there exists a feasible vector $v$ satisfying $v^{T}(\mathcal{A}v^{r-1}+q)=0$ and $v$ is said to be a solution of the TCP$(\mathcal{A},q)$.
\end{defn}

\begin{defn}
\cite{song2015properties} A tensor $\mathcal{A}\in \mathbb{R}^{[m,n]} $ is said to be a $P_0(P)$-tensor, if for each $v\in \mathbb{R}^n \backslash \{0\}$, there exists an index $j\in [n]$ such that $v_j \neq 0$ and $v_j (\mathcal{A}v^{m-1})_j \geq 0\; (>0)$.
\end{defn}

%\begin{defn}\cite{huang2015q} 
%A tensor $\mathcal{A}\in \mathbb{R}^{[m,n]}$ is said to be a $Q$-tensor if the TCP$(\mathcal{A},q)$ is solvable for all $q\in \mathbb{R}^n $.
%\begin{center}
%    i.e., $v\geq 0, ~~~  \mathcal{A}v^{m-1} + q\geq 0, ~~~\mbox{and}~~ v^{T} (\mathcal{A}v^{m-1} + q) =0.$
%\end{center}
%\end{defn}

\NI The row subtensors are defined in \cite{shao2016some}. Here for the sake of convenience we denote $i$-th rowsubtensor of $\mathcal{A}$ by $\mathcal{R}_i(\mathcal{A}).$
\begin{defn}\cite{shao2016some}
For each $i$ the $i$th row subtensor of $\mathcal{A}\in \mathbb{C}^{[m,n]}$ is denoted by $\mathcal{R}_i(\mathcal{A})$ and its entries are given as $(\mathcal{R}_i(\mathcal{A}))_{i_2 ... i_m}=(a_{i i_2... i_m})$, where $i_l\in [n]$ and $2\leq l \leq n.$
\end{defn}

\begin{defn}\cite{zhang2018nekrasov}
Let $\mathcal{A}\in \mathbb{C}^{[m,n]}$ such that $\mathcal{A}=(a_{i_1 i_2 ... i_m}).$ Let for all $i\in [n],$
\begin{equation*}
    R_i(\mathcal{A})= \sum_{(i_2 ... i_m) \neq (i ... i)} |a_{i i_2 ... i_m}|.
\end{equation*}
The tensor $\mathcal{A}$ is said to be diagonally dominant (strict diagonally dominant) if $|a_{i, ..., i}| \geq (>) R_i(\mathcal{A})$ for all $i\in [n].$
\end{defn}

\begin{defn}\cite{zhang2018nekrasov}
Let $\mathcal{A}\in \mathbb{C}^{[m,n]}$ is called quasi-diagonally dominant tensor if there exists a diagonal matrix $D=diag(d_1, d_2, \cdots, d_n)$ such that $\mathcal{A} D$ is strictly diagonally dominant tensor, i.e.,
\begin{equation*}
  |a_{i i \cdots i}|d_i^{m-1} > |a_{i i_2 \cdots i_m}|d_{i_2} \cdots d_{i_m}, \;\;\; \forall\; i\in [n]. 
\end{equation*}
\end{defn}

\NI The concept of eigenvalues and eigenvectors for tensors was proposed by Qi \cite{qi2005eigenvalues} and Lim \cite{lim2005singular}. If a pair $(\lambda, v) \in \mathbb{C}\times (\mathbb{C}^n \backslash \ \{0\})$ satisfies the equation $\mathcal{A} v^{m-1} = \lambda v^{m-1},$ then $\lambda$ is said to be an eigenvalue of $\mathcal{A}$, and $v$ is said to be an eigenvector corresponding to $\lambda$ of $\mathcal{A}$, where $v^{[m−1]} = (v_1^{m−1},... , v_n^{m−1})^T.$ The quantity $\rho(\mathcal{A}) = \max\{|\lambda| \; : \; \lambda \text{ is an eigen value of } \mathcal{A}\}$ is said to be the spectral radius of $\mathcal{A}$.

\begin{defn}\cite{ding2013m}
A tensor is said to be $Z$ tensor if all its offdiagonal entries are nonpositive.
\end{defn}
\begin{defn}\cite{ding2013m}
A $Z$-tensor $\mathcal{A}$ is said to be an $M$-tensor if $\exists \; \mathcal{B}\; \geq \mathcal{O}$ and $s>0$ such that $\mathcal{A}= s \mathcal{I} - \mathcal{B}$, where $s\geq \rho(\mathcal{B})$. $\mathcal{A}$ is said to be a nonsingular $M$-tensor if $s>\rho(\mathcal{B}).$
\end{defn}

\begin{defn}\cite{ding2013m}
For a tensor $\mathcal{A}= a_{i_1 i_2 ... i_m}\in \mathbb{C}^{[m,n]},$ a tensor $\mathcal{C}(\mathcal{A})= c_{i_1 i_2 ... i_m}$ is said to be the comparison tensor of $\mathcal{A}$ if 
\begin{center}
    $c_{i_1 i_2 ... i_m}=\left\{\begin{array}{ll}
       |a_{i_1 i_2 ... i_m}|  & , \text{ if } (i_1, i_2, ... ,i_m) =(i,i, ..., i), \\
       -|a_{i_1 i_2 ... i_m}|  & , \text{ if } (i_1, i_2, ... ,i_m) \neq (i,i, ..., i).
    \end{array} \right.$
\end{center}
\end{defn}

\begin{defn}\cite{ding2013m}
A tensor $\mathcal{A}$ is said to be an $H$-tensor, if its comparison tensor is an $M$-tensor, and it is said to be a nonsingular $H$-tensor, if its comparison tensor is a nonsingular $M$-tensor.
\end{defn}

\begin{defn}
A tensor $\mathcal{A}\in \mathbb{R}^{[m,n]}$ is said to be a Nekrasov $Z$-tensor if $\mathcal{A}$ is a Nekrasov tensor as well as a $Z$-tensor.
\end{defn}

\begin{defn}\cite{zhang2018nekrasov}
For $\mathcal{A}=(a_{i_1 i_2 ... i_m}) \in \mathbb{C}^{[m,n]}, \; a_{i i ... i}\neq 0, \; \forall \; i\in [n].$ Let 
\begin{align*}
    \Lambda_1(\mathcal{A}) & =R_1(\mathcal{A}),\\ 
    \Lambda_i(\mathcal{A}) & =\sum_{i_2...i_m \in [i-1]^{m-1}} |a_{i i_2 ... i_m}| \left(\frac{\Lambda_{i_2} (\mathcal{A})}{|a_{i_2 i_2 ... i_2}|}\right)^{\frac{1}{m-1}} \cdots \left(\frac{\Lambda_{i_m} (\mathcal{A})}{|a_{i_m i_m ... i_m}|}\right)^{\frac{1}{m-1}}\\ 
    & \qquad + \sum_{i_2...i_m \notin [i-1]^{m-1}, (i_2 ... i_m)\neq (i ... i)} |a_{i i_2 ... i_m}|, \; i=2,3,...,n. 
\end{align*}
 A tensor $\mathcal{A}=(a_{i_1 i_2 ... i_m})\in \mathbb{R}^{[m,n]}$ is said to be a Nekrasov tensor if 
 \begin{equation}
     |a_{ii...i}| > \Lambda_i(\mathcal{A}), \; \forall\; i\in [n].
 \end{equation}
\end{defn}

\begin{theorem}\cite{ding2013m}\label{nonsingular H if and only if quasi-SDD}
A tensor $\mathcal{A}$ is a nonsingular $H$-tensor if and only if it is quasi-strictly diagonally dominant tensor.
\end{theorem}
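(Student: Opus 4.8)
The plan is to reduce the statement, via the comparison tensor, to the corresponding characterization of nonsingular $M$-tensors, and then to prove that characterization using the Perron--Frobenius theory of nonnegative tensors. By definition $\mathcal{A}$ is a nonsingular $H$-tensor exactly when $\mathcal{C}(\mathcal{A})$ is a nonsingular $M$-tensor; and since $|(\mathcal{C}(\mathcal{A}))_{i_1\cdots i_m}|=|a_{i_1\cdots i_m}|$ for all indices and strict diagonal dominance under a positive diagonal scaling depends only on these moduli (using $\mathcal{C}(\mathcal{A})D=\mathcal{C}(\mathcal{A}D)$), the tensor $\mathcal{A}$ is quasi-strictly diagonally dominant iff $\mathcal{C}(\mathcal{A})$ is. If some $a_{i\cdots i}=0$ then both sides fail simultaneously (a nonsingular $M$-tensor has positive diagonal entries, and a tensor with a vanishing diagonal entry cannot be made strictly diagonally dominant by any positive scaling), so we may assume $a_{i\cdots i}\neq 0$ for every $i$, and it suffices to show that the $Z$-tensor $\mathcal{C}(\mathcal{A})$ is a nonsingular $M$-tensor iff it is quasi-strictly diagonally dominant.

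For the ``if'' direction, let the positive diagonal matrix $D=\mathrm{diag}(d_1,\dots,d_n)$ witness the strict dominance, so $|a_{i\cdots i}|d_i^{m-1}>\sum_{(i_2\cdots i_m)\neq(i\cdots i)}|a_{ii_2\cdots i_m}|d_{i_2}\cdots d_{i_m}$ for all $i$. Choose $s>\max_i|a_{i\cdots i}|$ and set $\mathcal{B}=s\mathcal{I}-\mathcal{C}(\mathcal{A})\geq\mathcal{O}$. Splitting off the diagonal term, a one-line computation with the vector $d=(d_1,\dots,d_n)$ gives $(\mathcal{B}d^{m-1})_i<s\,d_i^{m-1}$ for every $i$; hence $\rho(\mathcal{B})\leq\max_i(\mathcal{B}d^{m-1})_i/d_i^{m-1}<s$ by the Collatz--Wielandt (Gershgorin-type) bound for nonnegative tensors, so $\mathcal{C}(\mathcal{A})=s\mathcal{I}-\mathcal{B}$ is a nonsingular $M$-tensor.

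For the ``only if'' direction, fix a representation $\mathcal{C}(\mathcal{A})=s\mathcal{I}-\mathcal{B}$ with $\mathcal{B}\geq\mathcal{O}$ and $s>\rho(\mathcal{B})$. Perturb to $\mathcal{B}_\varepsilon=\mathcal{B}+\varepsilon\mathcal{E}$, where $\mathcal{E}$ is the all-ones tensor; $\mathcal{B}_\varepsilon$ is a positive, hence irreducible, nonnegative tensor, so it has a Perron eigenvector $u>0$ with $\mathcal{B}_\varepsilon u^{m-1}=\rho(\mathcal{B}_\varepsilon)\,u^{[m-1]}$, where $u^{[m-1]}=(u_1^{m-1},\dots,u_n^{m-1})^T$. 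Since $\rho$ is continuous in the entries, $\rho(\mathcal{B}_\varepsilon)<s$ for $\varepsilon$ small, and then $\mathcal{B}u^{m-1}\leq\mathcal{B}_\varepsilon u^{m-1}=\rho(\mathcal{B}_\varepsilon)u^{[m-1]}<s\,u^{[m-1]}$ componentwise. Subtracting $b_{i\cdots i}u_i^{m-1}$ from the $i$th inequality and writing $D=\mathrm{diag}(u_1,\dots,u_n)$ yields $\sum_{(i_2\cdots i_m)\neq(i\cdots i)}|a_{ii_2\cdots i_m}|u_{i_2}\cdots u_{i_m}<(s-b_{i\cdots i})u_i^{m-1}=|a_{i\cdots i}|u_i^{m-1}$, i.e. $\mathcal{C}(\mathcal{A})D$ --- and therefore $\mathcal{A}D$ --- is strictly diagonally dominant, so $\mathcal{A}$ is quasi-strictly diagonally dominant.

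The bookkeeping (the behaviour of $\mathcal{A}$ under right multiplication by a diagonal matrix, $(\mathcal{A}D)_{ii_2\cdots i_m}=a_{ii_2\cdots i_m}d_{i_2}\cdots d_{i_m}$, and the identity $\mathcal{C}(\mathcal{A}D)=\mathcal{C}(\mathcal{A})D$) is routine. The step I expect to be the real obstacle is making the Perron--Frobenius apparatus for nonnegative tensors do the work: the min--max/Collatz--Wielandt estimate for the spectral radius, the existence of a strictly positive eigenvector for an irreducible nonnegative tensor, and the continuity of $\rho$ under perturbation (with the reducible case handled precisely by the $\varepsilon\mathcal{E}$ perturbation). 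I would invoke these as established facts rather than reprove them.
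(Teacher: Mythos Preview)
The paper does not supply its own proof of this theorem; it is quoted from Ding, Qi and Wei \cite{ding2013m} as an established result, so there is no in-paper argument to compare against.

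Your proof is correct and is essentially the standard one. The reduction to the comparison tensor $\mathcal{C}(\mathcal{A})$ is immediate from the definitions, and both directions rest on Perron--Frobenius theory for nonnegative tensors exactly as you describe. For the ``if'' direction, the bound $\rho(\mathcal{B})\leq\max_i(\mathcal{B}d^{m-1})_i/d_i^{m-1}$ is justified by the diagonal-similarity invariance of the tensor spectrum combined with the tensor Gershgorin theorem (after scaling, the $i$th row sum of the similar nonnegative tensor is $(\mathcal{B}d^{m-1})_i/d_i^{m-1}<s$). For the ``only if'' direction, your $\varepsilon\mathcal{E}$ perturbation to force weak irreducibility, then extracting a strictly positive Perron eigenvector, is precisely the standard device; in Ding--Qi--Wei the same step is usually phrased via the equivalent characterization ``there exists $x>0$ with $\mathcal{C}(\mathcal{A})x^{m-1}>0$'' of nonsingular $M$-tensors, which is nothing but quasi-strict diagonal dominance written out. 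The facts you plan to cite (Collatz--Wielandt bounds, existence of a positive Perron eigenvector for weakly irreducible nonnegative tensors, continuity of $\rho$) are all available in the tensor literature (Chang--Pearson--Zhang, Yang--Yang, Friedland--Gaubert--Han), so invoking rather than reproving them is appropriate.
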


\begin{theorem}\cite{zhang2018nekrasov}\label{nekrasov implies nonsingular H}
If a tensor $\mathcal{A}=(a_{i_1 i_2 ... i_m})\in \mathbb{C}^{[m,n]}$ is a Nekrasov tensor, then $\mathcal{A}$ is a nonsingular $H$-tensor.
\end{theorem}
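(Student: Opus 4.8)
The plan is to reduce the statement to Theorem~\ref{nonsingular H if and only if quasi-SDD}: it is enough to produce a diagonal matrix $D=\mathrm{diag}(d_1,\dots,d_n)$ with all $d_i>0$ such that $\mathcal{A}D$ is strictly diagonally dominant, i.e.
\[
|a_{ii\cdots i}|\,d_i^{m-1} \;>\; \sum_{(i_2\cdots i_m)\neq (i\cdots i)} |a_{ii_2\cdots i_m}|\,d_{i_2}\cdots d_{i_m}\qquad\text{for every }i\in[n].
\]
The Nekrasov hypothesis gives $0\le \Lambda_i(\mathcal{A})<|a_{ii\cdots i}|$ for all $i$, so the natural scaling is one for which $d_i^{m-1}$ sits just above $\Lambda_i(\mathcal{A})/|a_{ii\cdots i}|$. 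Concretely I would set $d_i=\big((\Lambda_i(\mathcal{A})+\varepsilon_i)/|a_{ii\cdots i}|\big)^{1/(m-1)}$ with $\varepsilon_i>0$ to be chosen small; for $\varepsilon_i$ small enough this forces $0<d_i<1$, and inserting $\varepsilon_i$ is exactly what keeps $d_i>0$ on rows where $\Lambda_i(\mathcal{A})=0$.

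To verify the displayed inequality I would split the off-diagonal sum for row $i$ into the part $T_1$ over multi-indices $(i_2\cdots i_m)\in[i-1]^{m-1}$ and the remaining part $T_2$. Since every $d_{i_\ell}<1$, the part $T_2$ is at most $\sum_{(i_2\cdots i_m)\notin[i-1]^{m-1},\ \neq(i\cdots i)}|a_{ii_2\cdots i_m}|$, which is precisely the second summand in the definition of $\Lambda_i(\mathcal{A})$. In $T_1$, the product $d_{i_2}\cdots d_{i_m}$ is a continuous, nondecreasing function of $(\varepsilon_{i_2},\dots,\varepsilon_{i_m})$ that collapses, at $\varepsilon=0$, to $\prod_{\ell=2}^{m}\big(\Lambda_{i_\ell}(\mathcal{A})/|a_{i_\ell i_\ell\cdots i_\ell}|\big)^{1/(m-1)}$, the exact weight appearing in the first summand of $\Lambda_i(\mathcal{A})$. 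Hence $T_1+T_2\le \Lambda_i(\mathcal{A})+E_i$, where the error term $E_i\ge 0$ depends only on $\varepsilon_1,\dots,\varepsilon_{i-1}$ and tends to $0$ as these tend to $0$ (using continuity of $x\mapsto x^{1/(m-1)}$ at $x\ge 0$). Since $|a_{ii\cdots i}|\,d_i^{m-1}=\Lambda_i(\mathcal{A})+\varepsilon_i|a_{ii\cdots i}|$, the whole inequality reduces to the single requirement $E_i<\varepsilon_i|a_{ii\cdots i}|$.

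Finally I would choose the $\varepsilon_i$ recursively in increasing order of $i$. Having fixed $\varepsilon_1,\dots,\varepsilon_{i-1}$, the number $E_i$ is determined, and because $|a_{ii\cdots i}|-\Lambda_i(\mathcal{A})>0$ is a fixed positive constant one may take $\varepsilon_i$ in the interval $\big(E_i/|a_{ii\cdots i}|,\ |a_{ii\cdots i}|-\Lambda_i(\mathcal{A})\big)$, which is nonempty as soon as $E_i$ is small enough; and $E_i$ can be made as small as needed by shrinking the earlier $\varepsilon_1,\dots,\varepsilon_{i-1}$, since $E_i$ is nondecreasing in each of them and vanishes at $0$. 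This finite cascade of smallness conditions produces the desired $D$, so $\mathcal{A}D$ is strictly diagonally dominant and Theorem~\ref{nonsingular H if and only if quasi-SDD} gives that $\mathcal{A}$ is a nonsingular $H$-tensor. The one delicate point — and the only use of the Nekrasov inequality beyond $a_{ii\cdots i}\neq 0$ — is this bookkeeping: the strict inequality $\Lambda_i(\mathcal{A})<|a_{ii\cdots i}|$ is what simultaneously gives $d_i<1$ and the slack $\varepsilon_i|a_{ii\cdots i}|>E_i$ needed to upgrade the defining ``$\le$'' coming from $\Lambda_i$ into the strict ``$<$'' of strict diagonal dominance.
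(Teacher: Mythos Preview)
The paper does not prove this statement: it is quoted from \cite{zhang2018nekrasov} as a preliminary. The nearest in-paper analogue is Theorem~\ref{theorem to convert into SDD}, which builds the same kind of diagonal scaling under the extra hypothesis that for each $i<n$ some $b_{ii_2\cdots i_m}$ with all $i_\ell>i$ is nonzero; this forces $\Lambda_i(\mathcal B)>0$ for $i<n$, so the paper can set $w_i=(\Lambda_i/|b_{i\cdots i}|)^{1/(m-1)}$ exactly for $i<n$ and perturb only $w_n$ by a single $\varepsilon$. Your construction removes that extra hypothesis by allowing a separate $\varepsilon_i$ on every coordinate, which is the natural generalisation.

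Your argument is sound, with one point worth tightening. You choose the $\varepsilon_i$ forward in $i$ but then say you will ``shrink the earlier $\varepsilon_1,\dots,\varepsilon_{i-1}$'' if $E_i$ turns out too large---as written this is circular. A clean fix: with $U_i:=|a_{i\cdots i}|-\Lambda_i(\mathcal A)>0$, set $\varepsilon_1(s)=sU_1$ and recursively $\varepsilon_i(s)=E_i\bigl(\varepsilon_1(s),\dots,\varepsilon_{i-1}(s)\bigr)+sU_i$; by induction each $\varepsilon_i(s)\to 0$ as $s\to 0^+$, so for small $s$ all constraints $E_i<\varepsilon_i<U_i$ hold simultaneously. (There is also a harmless mismatch between your two normalisations of $d_i$: the interval endpoints $E_i/|a_{i\cdots i}|$ and $|a_{i\cdots i}|-\Lambda_i$ belong to different conventions.) Modulo this, your route via Theorem~\ref{nonsingular H if and only if quasi-SDD} with $d_i\approx(\Lambda_i/|a_{i\cdots i}|)^{1/(m-1)}$ is exactly the mechanism behind the paper's Theorem~\ref{theorem to convert into SDD}.
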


\begin{theorem}\cite{ding2018p}\label{nonsingular H implies P}
A nonsingular $H$-tensor of even order with all positive diagonal entries is a $P$-tensor.
\end{theorem}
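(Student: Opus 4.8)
The plan is to combine the quasi-strictly-diagonally-dominant characterization of nonsingular $H$-tensors (Theorem \ref{nonsingular H if and only if quasi-SDD}) with an extremal-index estimate on $(\mathcal{A}v^{m-1})_j$, following the classical template for proving that an even-order strictly diagonally dominant tensor with positive diagonal entries is a $P$-tensor.

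First I would let $\mathcal{A}=(a_{i_1 i_2\cdots i_m})\in\mathbb{R}^{[m,n]}$ be a nonsingular $H$-tensor of even order $m$ with $a_{ii\cdots i}>0$ for all $i\in[n]$, and apply Theorem \ref{nonsingular H if and only if quasi-SDD} to get a diagonal matrix $D=\mathrm{diag}(d_1,\dots,d_n)$ with every $d_i>0$ such that $\mathcal{A}D$ is strictly diagonally dominant, i.e. $a_{ii\cdots i}\,d_i^{m-1}>\sum_{(i_2\cdots i_m)\neq(i\cdots i)}|a_{i i_2\cdots i_m}|\,d_{i_2}\cdots d_{i_m}$ for every $i$.

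Next, for an arbitrary $v\in\mathbb{R}^n\setminus\{0\}$, I would choose an index $j\in[n]$ attaining $\max_{i\in[n]}|v_i|/d_i$, so that $v_j\neq0$ and $|v_k|\le(|v_j|/d_j)\,d_k$ for all $k\in[n]$. Splitting off the diagonal term, $(\mathcal{A}v^{m-1})_j=a_{jj\cdots j}v_j^{m-1}+\sum_{(i_2\cdots i_m)\neq(j\cdots j)}a_{j i_2\cdots i_m}v_{i_2}\cdots v_{i_m}$, and multiplying through by $v_j$, the diagonal part becomes $a_{jj\cdots j}v_j^m=a_{jj\cdots j}|v_j|^m>0$ since $m$ is even and $a_{jj\cdots j}>0$, while the off-diagonal part is bounded in modulus, using $|v_{i_2}\cdots v_{i_m}|\le(|v_j|/d_j)^{m-1}d_{i_2}\cdots d_{i_m}$ together with the strict dominance of $\mathcal{A}D$, by $(|v_j|^m/d_j^{m-1})\sum_{(i_2\cdots i_m)\neq(j\cdots j)}|a_{j i_2\cdots i_m}|\,d_{i_2}\cdots d_{i_m}<a_{jj\cdots j}|v_j|^m$. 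Hence $v_j(\mathcal{A}v^{m-1})_j>0$, which is exactly the defining property of a $P$-tensor for the vector $v$.

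The individual estimates are routine; the points I would be careful about are that the scaling matrix $D$ furnished by Theorem \ref{nonsingular H if and only if quasi-SDD} must have strictly positive diagonal (so the ratios $|v_i|/d_i$ make sense and the rescaled dominance inequalities are genuinely preserved), and that the evenness of $m$ is used at precisely the right spot: after multiplying by $v_j$ one sees $v_j^m$, not $v_j^{m-1}$, so the sign of the leading term is dictated by $a_{jj\cdots j}>0$ independently of $\mathrm{sign}(v_j)$. This is the only place where the even-order hypothesis is essential, and it is where the argument would fail for odd $m$.
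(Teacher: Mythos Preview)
The paper does not supply its own proof of this theorem: it is quoted in the Preliminaries section with the citation \cite{ding2018p} and used as a black box later on. So there is no ``paper's proof'' to compare against.

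That said, your argument is correct and is essentially the standard one found in the literature for this result. You invoke Theorem \ref{nonsingular H if and only if quasi-SDD} to obtain a positive diagonal scaling $D$ making $\mathcal{A}D$ strictly diagonally dominant, pick the index $j$ maximizing $|v_i|/d_i$, and then bound the off-diagonal contribution to $v_j(\mathcal{A}v^{m-1})_j$ by the diagonal term using the dominance inequality for row $j$. The evenness of $m$ is used exactly where you say, to turn $v_j\cdot v_j^{m-1}$ into $|v_j|^m$. The one caveat you already flag---that the scaling $D$ from Theorem \ref{nonsingular H if and only if quasi-SDD} must have strictly positive diagonal entries---is indeed how the quasi-strictly-diagonally-dominant property is formulated in \cite{ding2013m}, even though the paper's restatement of that definition does not make the positivity explicit; your proof relies on it and it holds in the source.
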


\begin{theorem}\cite{bai2016global}
For any $q\in \mathbb{R}^n$ and a $P$-tensor $\mathcal{A} \in \mathbb{R}^{[m,n]}$, the solution set of TCP$(\mathcal{A},q)$ is nonempty and compact.
\end{theorem}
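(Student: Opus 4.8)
The plan is to recast $\mathrm{TCP}(\mathcal{A},q)$ as a zero-finding problem and then combine an a priori bound on its solutions with a topological-degree computation. Put $\phi(v):=\min\{v,\ \mathcal{A}v^{m-1}+q\}$ (componentwise minimum); then $\phi$ is continuous and $v\in\mathrm{SOL}(\mathcal{A},q)$ if and only if $\phi(v)=0$. Closedness of $\mathrm{SOL}(\mathcal{A},q)$ is immediate because it is cut out by continuous (in)equalities, so the real content is boundedness and nonemptiness.

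For boundedness I would prove the stronger statement that $S:=\bigcup_{t\in[0,1]}\mathrm{SOL}(\mathcal{A},tq)$ is bounded. If not, choose $v^k\in\mathrm{SOL}(\mathcal{A},t_kq)$ with $\|v^k\|\to\infty$; passing to a subsequence, assume $t_k\to t^\ast$ and $u^k:=v^k/\|v^k\|\to u$, so $u\ge 0$, $\|u\|=1$. Dividing $\mathcal{A}(v^k)^{m-1}+t_kq\ge 0$ by $\|v^k\|^{m-1}$ and $(v^k)^\top(\mathcal{A}(v^k)^{m-1}+t_kq)=0$ by $\|v^k\|^{m}$ and letting $k\to\infty$ gives $\mathcal{A}u^{m-1}\ge 0$ and $u^\top\mathcal{A}u^{m-1}=0$. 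Since $u\ge 0$ and $\mathcal{A}u^{m-1}\ge 0$, each term $u_i(\mathcal{A}u^{m-1})_i$ is nonnegative and their sum vanishes, so $u_i(\mathcal{A}u^{m-1})_i=0$ for all $i$; as $u\ne 0$ this contradicts the $P$-tensor property. Taking $t_k\equiv 1$ here already shows $\mathrm{SOL}(\mathcal{A},q)$ is bounded, hence compact.

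For nonemptiness, fix $R>\sup_{v\in S}\|v\|$. The homotopy $H(v,t)=\min\{v,\ \mathcal{A}v^{m-1}+tq\}$, $t\in[0,1]$, has zero set $\mathrm{SOL}(\mathcal{A},tq)\subseteq S$, so it never vanishes on $\partial B(0,R)$ and $\deg(\phi,B(0,R),0)=\deg\big(\min\{\cdot,\mathcal{A}(\cdot)^{m-1}\},B(0,R),0\big)$. Next run the homotopy $G(v,s)=\min\{v,\ s\,\mathcal{A}v^{m-1}+(1-s)v\}$, $s\in[0,1]$, from $\min\{\cdot,\mathcal{A}(\cdot)^{m-1}\}$ at $s=1$ to the identity at $s=0$. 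If $G(v,s)=0$ with $v\ne 0$, then $v\ge 0$ and $v_i\big(s(\mathcal{A}v^{m-1})_i+(1-s)v_i\big)=0$ for every $i$; choosing the index $j$ with $v_j\ne 0$ and $v_j(\mathcal{A}v^{m-1})_j>0$ furnished by the $P$-tensor property and multiplying through by $v_j>0$ gives $s\,v_j(\mathcal{A}v^{m-1})_j+(1-s)v_j^2=0$, which is impossible (for $s>0$ the first summand is positive and the second nonnegative; for $s=0$ the equation reads $v_j^2=0$). Hence $G$ is admissible on $\partial B(0,R)$ for all $s$, so $\deg(\phi,B(0,R),0)=\deg(\mathrm{id},B(0,R),0)=1\ne 0$, and therefore $\phi$ has a zero in $B(0,R)$, i.e.\ $\mathrm{SOL}(\mathcal{A},q)\ne\emptyset$.

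The crux is the a priori bound: everything hinges on the $P$-tensor inequality surviving the normalized passage to the limit --- equivalently, on ruling out an ``exceptional family of elements'' for the polynomial map $v\mapsto\mathcal{A}v^{m-1}+q$. Once that limiting argument is secured, the two homotopies are standard and the degree bookkeeping routine; note in particular that, unlike the route through nonsingular $H$-tensors, no evenness of $m$ is used.
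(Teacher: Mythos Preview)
The paper does not prove this theorem; it is quoted from \cite{bai2016global} as a preliminary result and used later only to draw the conclusion in Remark~3.1. So there is no in-paper argument to compare against.

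Your argument is correct and self-contained. The reformulation $\phi(v)=\min\{v,\mathcal{A}v^{m-1}+q\}$ is the standard natural-map equivalence; the boundedness step is a clean normalization/compactness argument that exploits precisely the $P$-tensor definition on the limit direction $u$; and the two homotopies are admissible for the reasons you give, so the degree computation goes through. One minor point of presentation: in the second homotopy you write ``multiplying through by $v_j>0$'', but you have already multiplied by $v_j$ when forming $v_j\big(s(\mathcal{A}v^{m-1})_j+(1-s)v_j\big)$; the contradiction is simply that this quantity is strictly positive while complementarity forces it to vanish. Also, your closing remark that evenness of $m$ is not used is fine at the level of the argument, but vacuous in practice since no $P$-tensor of odd order exists (take $v$ and $-v$).
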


\section{Main results}

%\begin{examp}\label{example of nekrasov Z tensor}
%We consider the Example 3.3 of \cite{zhang2018nekrasov}. Let $\mathcal{B}\in \mathbb{R}^{[4,4]}$ be such that $b_{1111}=8,\; b_{2222}=3.8,\; b_{3333}=3,\; b_{4444}=10,\; b_{1112}=b_{2111}=b_{1211}=b_{1121}=-1,\;$ $b_{3222}=b_{2322}=b_{2232}=b_{2223}=-1,$ $b_{4441}=b_{4414}=b_{4144}=b_{1444}=-3$ and all other entries of $\mathcal{B}$ are zeros. Then $R_1(\mathcal{B})=6,\; R_2(\mathcal{B})=4,\; R_3(\mathcal{B})=1,\; R_4(\mathcal{B})=9.$ Since $|b_{2222}=| < R_2(\mathcal{B}),$ so $\mathcal{B}$ is not diagonally dominant tensor. However $\lambda_1(\mathcal{B})=6,\; \lambda_2(\mathcal{B})=3.75,\;\lambda_3(\mathcal{B})=0.98,\;\lambda_4(\mathcal{B})=9.$ Also $|b_{ii...i}|>\lambda_i(\mathcal{B}),\; \forall\; i\in [4].$ Hence $\mathcal{B}$ is Nekrasov tensor. Also all the offdiagonal entries of $\mathcal{B}$ are nonpositive. Therefore $\mathcal{B}$ is a $Z$-tensor. Hence $\mathcal{B}$ is a Nekrasov $Z$-tensor.
%\end{examp}

We begin by introducing $B$-Nekrasov tensor. We want to define a class of tensors which will contain Nekrasov $Z$-tensors with positive diagonal entries. To do so we use a decomposition of tensors which will be useful in our discussion.\\

\NI Given a tensor $\mathcal{A}=(a_{i_1 i_2 ... i_m})\in \mathbb{R}^{[m,n]},$ we can write $\mathcal{A}$ as
\begin{equation}\label{decomposition equation}
    \mathcal{A} = \mathcal{B}^+ + \mathcal{C},
\end{equation}
where $\forall \; i\in [n]$ the elements of rowsubtensors of $\mathcal{B}$ and $\mathcal{C}$ are given by,
\begin{equation}\label{law for B+}
(R_i(\mathcal{B}^+))_{i_2 \cdots i_m} = a_{i i_2 ... i_m} - r_i^+, \;\forall \; i_l\in [n], \; l\in [m]
\end{equation}
and
\begin{equation}\label{law for C}
(R_i(\mathcal{C}))_{i_2 \cdots i_m} =r_i^+ , \;\forall \; i_l\in [n], \; l\in [m]..
\end{equation}
with
\begin{equation}\label{law for r_i^+}
    r_i^+ =\max_{i_2, ..., i_m \in [n], \; ( i_2 \cdots i_m) \neq (i \cdots i)} \{ 0, a_{i i_2 ... i_m} \}
\end{equation}

\begin{prop}
Let $\mathcal{A}=(a_{i_1 i_2 ... i_m})\in \mathbb{R}^{[m,n]}.$ If $\mathcal{A}$ is decomposed as given in the equation (\ref{decomposition equation}), then $\mathcal{B}^+$ is a $Z$-tensor and $\mathcal{C}$ is nonnegative tensor.
\end{prop}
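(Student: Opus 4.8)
The plan is to unwind the definitions in equations (\ref{law for B+})--(\ref{law for r_i^+}) and check the two claims entrywise. For a fixed row index $i \in [n]$, the quantity $r_i^+ = \max\{0,\ a_{i i_2 \cdots i_m} : (i_2\cdots i_m)\neq (i\cdots i)\}$ is by construction a nonnegative real number, so every entry of $\mathcal{R}_i(\mathcal{C})$ equals $r_i^+ \geq 0$; this immediately gives that $\mathcal{C}$ is a nonnegative tensor. That half is essentially a one-line observation.

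For the $Z$-tensor claim about $\mathcal{B}^+$, I would separate the diagonal entry from the off-diagonal entries of the $i$th row subtensor. An off-diagonal entry of $\mathcal{B}^+$ in row $i$ has the form $a_{i i_2 \cdots i_m} - r_i^+$ with $(i_2\cdots i_m)\neq (i\cdots i)$. Since $r_i^+$ is the maximum over all such off-diagonal positions of the values $a_{i i_2\cdots i_m}$ (together with $0$), we have $r_i^+ \geq a_{i i_2\cdots i_m}$ for every off-diagonal $(i_2\cdots i_m)$, hence $a_{i i_2\cdots i_m} - r_i^+ \leq 0$. Thus all off-diagonal entries of $\mathcal{B}^+$ are nonpositive, which is exactly the definition of a $Z$-tensor (Definition by Ding et al.\ quoted above). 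The diagonal entry $a_{ii\cdots i} - r_i^+$ is unconstrained in sign and plays no role here, so nothing more is needed.

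There is essentially no obstacle: the only point requiring a moment's care is making sure the index range in the definition of $r_i^+$ matches the index range of the off-diagonal entries of $\mathcal{B}^+$ — both exclude only the fully-diagonal multi-index $(i\cdots i)$ — so that the inequality $r_i^+ \geq a_{i i_2\cdots i_m}$ is valid precisely for the entries we need it for. I would write the argument in two short displayed inequalities, one for each claim, and conclude. No earlier theorem is needed beyond the definitions of $Z$-tensor and nonnegative tensor.
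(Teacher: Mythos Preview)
Your proposal is correct. The paper states this proposition without proof, since it is immediate from the definitions of $r_i^+$, $\mathcal{B}^+$, and $\mathcal{C}$; your entrywise verification is exactly the routine check the paper leaves to the reader.
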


Here we define $B$-Nekrasov tensor.
\begin{defn}
A tensor $\mathcal{A}\in \mathbb{R}^{[m,n]}$ is said to be a $B$-Nekrasov tensor if it can be written in the form of (\ref{decomposition equation}) with $\mathcal{B}^+$ as a Nekrasov $Z$-tensor whose diagonal entries are all positive.
\end{defn}

%\begin{remk}
%For this to happen for each row subtensor the maximum of the positive elements except the diagonal element of the tensor corresponding to that particular row subtensor must be strictly less than the corresponding diagonal element  of the original tensor.
%\end{remk}

Here is an example of a $B$-Nekrasov tensor which is not a Nekrasov tensor.%(nor $B$-tensor)

\begin{examp}
Let $\mathcal{A}=(a_{i j k l})\in \mathbb{R}^{[4,4]}$ be such that $a_{1111}=10,\; a_{2222}=6.8,\; a_{3333}=3,\; a_{4444}=10,\; a_{1112}=a_{1211}=a_{1121}=1,\; a_{1444}=-1$ and $a_{1jkl}=2$ for all other entries of $j,\; k,\; l\in [4],$ 
$a_{2111}=2,\; a_{2322}=a_{2232}=a_{2223}=2,\;$ and $a_{2jkl}=3$ for all other entries of $j,\; k,\; l\in [4],$
$a_{3222}=-1$ and $a_{3jkl}=0$ for all other entries of $j,\; k,\; l\in [4]$ $a_{4441}=a_{4414}=a_{4144}=-3,\;$ and $a_{4jkl}=0$ for all other entries of $j,\; k,\; l\in [4].$

\NI Then
\begin{equation}
    r_1^+ =\max \{0,1,-1,2\}=2,
\end{equation}
\begin{equation}
    r_2^+ =\max \{0,2,3\}=3,
\end{equation}
\begin{equation}
    r_3^+ =\max \{0,-1\}=0,
\end{equation}
\begin{equation}
    r_4+ =\max \{0,-3\}=0,
\end{equation}
Then the tensor can be decomposed into two tensor as $\mathcal{A}=\mathcal{B}^+ + \mathcal{C}$, where $(\mathcal{R}_1(\mathcal{C}))_{jkl}= r_1^+ =2$ forall $j,\; k,\; l\in [4],$ $(\mathcal{R}_2(\mathcal{C}))_{jkl}= r_2^+ =3$ forall $j,\; k,\; l\in [4]$ and all other entries of $\mathcal{C}$ are zeros. The tensor $\mathcal{B}^+$ is same as the tensor $\mathcal{B}$ of Example \ref{example of nekrasov Z tensor}, which is proved to be a Nekrasov $Z$-tensor. Also all the diagonal entries of $\mathcal{B}^+$ are $8,\; 3.8,\; 3,\; 10,$ which are positive. Therefore $\mathcal{B}^+$ is a Nekrasov $Z$-tensor with positive diagonal entries. Hence $\mathcal{A}$ is a $B$-Nekrasov tensor.

Now for $i=1$ we have $a_{1111}=10$ and $\Lambda_1(\mathcal{A})=\mathcal{R}_1(\mathcal{A}).$ Now
\begin{equation*}
    R_i(\mathcal{A})= \sum_{(i_2 ... i_m) \neq (i ... i)} |a_{i i_2 ... i_m}| = 122.
\end{equation*}
Clearly $10=a_{1111}< 122 = \Lambda_1(\mathcal{A}).$ Therefore $(\mathcal{A})$ is not a Nekrasov tensor.

%Then $R_1(\mathcal{B})=6,\; R_2(\mathcal{B})=4,\; R_3(\mathcal{B})=1,\; R_4(\mathcal{B})=9.$ Since $|b_{2222}=| < R_2(\mathcal{B}),$ so $\mathcal{B}$ is not diagonally dominant tensor. However $\lambda_1(\mathcal{B})=6,\; \lambda_2(\mathcal{B})=3.75,\;\lambda_3(\mathcal{B})=0.98,\;\lambda_4(\mathcal{B})=9.$ Also $|a_{ii...i}|>\lambda_i(\mathcal{A}),\; \forall\; i\in [4].$ Hence $\mathcal{B}$ is Nekrasov tensor. Also all the offdiagonal entries of $\mathcal{B}$ are nonpositive. Therefore $\mathcal{B}$ is a $Z$-tensor. Hence $\mathcal{A}$ is a $B$-Nekrasov tensor.
\end{examp}

\begin{prop}
A Nekrasov $Z$-tensor with positive diagonal entries is a $B$-Nekrasov tensor.
\end{prop}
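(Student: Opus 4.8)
The plan is to show that if $\mathcal{A}$ is a Nekrasov $Z$-tensor with positive diagonal entries, then in its decomposition $\mathcal{A} = \mathcal{B}^+ + \mathcal{C}$ from (\ref{decomposition equation}) one has $\mathcal{B}^+ = \mathcal{A}$, so that the defining requirement on $\mathcal{B}^+$ (being a Nekrasov $Z$-tensor with positive diagonal entries) is satisfied trivially. The key observation is that the numbers $r_i^+$ defined in (\ref{law for r_i^+}) all vanish. Indeed, since $\mathcal{A}$ is a $Z$-tensor, every off-diagonal entry $a_{i i_2 \cdots i_m}$ with $(i_2 \cdots i_m) \neq (i \cdots i)$ is nonpositive, hence $\max\{0, a_{i i_2 \cdots i_m}\} = 0$ for each such multi-index, and therefore $r_i^+ = 0$ for all $i \in [n]$.

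Once $r_i^+ = 0$ for every $i$, equations (\ref{law for B+}) and (\ref{law for C}) give $(R_i(\mathcal{B}^+))_{i_2 \cdots i_m} = a_{i i_2 \cdots i_m}$ and $(R_i(\mathcal{C}))_{i_2 \cdots i_m} = 0$ for all indices, so $\mathcal{B}^+ = \mathcal{A}$ and $\mathcal{C} = \mathcal{O}$. Then I would simply note that $\mathcal{B}^+ = \mathcal{A}$ is by hypothesis a Nekrasov $Z$-tensor, and its diagonal entries, being the diagonal entries of $\mathcal{A}$, are all positive. Hence $\mathcal{A}$ admits a decomposition of the form (\ref{decomposition equation}) in which $\mathcal{B}^+$ is a Nekrasov $Z$-tensor with positive diagonal entries, which is precisely the definition of a $B$-Nekrasov tensor.

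There is essentially no obstacle here: the statement is a direct consequence of the fact that the $B$-Nekrasov decomposition reduces to the identity decomposition on $Z$-tensors. The only point that warrants a line of care is confirming that $r_i^+ = 0$ uses the full $Z$-tensor hypothesis (all off-diagonal entries nonpositive, which is exactly Definition of $Z$-tensor applied to each row subtensor), and that the positivity of the diagonal is carried over verbatim from $\mathcal{A}$ to $\mathcal{B}^+$. I would present the argument in three short sentences along the lines sketched above.
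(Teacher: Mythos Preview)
Your proposal is correct and follows exactly the paper's approach: the paper's proof is the one-line observation that for a Nekrasov $Z$-tensor with positive diagonal entries one has $\mathcal{B}^+=\mathcal{A}$ and $\mathcal{C}=\mathcal{O}$ in the decomposition (\ref{decomposition equation}). You have simply made explicit the reason $r_i^+=0$, which the paper leaves implicit.
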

\begin{proof}
The result immediately follows from the fact that for a Nekrasov $Z$-tensor with positive diagonal entries $\mathcal{A},$ the tensor can be easily decomposed into $\mathcal{A}=\mathcal{B}^+ + \mathcal{C},$ where $\mathcal{B}^+ = \mathcal{A}$ and $\mathcal{C} =\mathcal{O}.$
\end{proof}

Here we present a necessary and sufficient condition for $B$-Nekrasov tensor.

\begin{theorem}
Let $\mathcal{A}\in \mathbb{R}^{[m,n]}.$ Then $\mathcal{A}$ is a $B$-Nekrasov tensor if and only if the following condition hold:\\

(a) $\sum_{i_2...i_m\in [n]^{m-1}} a_{1i_2...i_m} >0 $ and $\frac{1}{n^{m-1}}\left( \sum_{i_2...i_m\in [n]^{m-1}} a_{1i_2...i_m} \right) > a_{1 j_2 \cdots j_m},$ for $j_2 \cdots j_m \in [n]^{m-1}$ and $(j_2, \cdots ,j_m)\neq (1,... ,1).$

(b) $r_i^+ <\min \left \{ \frac{\sum_{i_2...i_m\in [n]} a_{ii_2...i_m} \alpha_{i_2}\cdots \alpha_{i_m} + \sum_{i_2, ..., i_m \notin [i-1]^{m-1}} a_{i i_2 \cdots i_m}}{\sum_{i_2, ..., i_m \in [i-1]^{m-1}} \alpha_{i_2} \cdots \alpha_{i_m} + n^{m-1} -(i-1)^{m-1}},  a_{i...i} \right\}$, where $i\in [n]-\{1\},$
where $\alpha_{i_l}=\left(\frac{\Lambda_{i_l} (\mathcal{B}^+)}{|b^+_{i_l i_l ... i_l}|}\right)^{\frac{1}{m-1}},\; \forall\; 1\leq l \leq m$

\end{theorem}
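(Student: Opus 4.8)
The plan is to unwind both conditions (a) and (b) into the single requirement that $\mathcal{B}^+$ is a Nekrasov $Z$-tensor with positive diagonal entries, which is the definition of $\mathcal{A}$ being a $B$-Nekrasov tensor. By Proposition 3.1 we already know $\mathcal{B}^+$ is automatically a $Z$-tensor, so the only content is: (i) all diagonal entries $b^+_{i\cdots i}$ are positive, and (ii) the Nekrasov inequalities $|b^+_{i\cdots i}| > \Lambda_i(\mathcal{B}^+)$ hold for every $i\in[n]$. I would handle $i=1$ and $i\ge 2$ separately, since the recursive definition of $\Lambda_i$ treats the first index as a base case.

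First I would treat $i=1$. Here $b^+_{1i_2\cdots i_m} = a_{1i_2\cdots i_m} - r_1^+$, and by definition of $r_1^+$ every off-diagonal entry of $\mathcal{B}^+$ in row $1$ is $\le 0$ while $\Lambda_1(\mathcal{B}^+) = R_1(\mathcal{B}^+) = \sum_{(i_2\cdots i_m)\neq(1\cdots1)} |a_{1i_2\cdots i_m} - r_1^+| = \sum_{(i_2\cdots i_m)\neq(1\cdots1)} (r_1^+ - a_{1i_2\cdots i_m})$. Then the Nekrasov condition $b^+_{1\cdots1} > \Lambda_1(\mathcal{B}^+)$ together with positivity of $b^+_{1\cdots1}$ rearranges, after adding $r_1^+$ to the count of all $n^{m-1}$ index tuples, into $\sum_{i_2\cdots i_m\in[n]^{m-1}} a_{1i_2\cdots i_m} > 0$; and the requirement $r_1^+ < \frac{1}{n^{m-1}}\sum a_{1i_2\cdots i_m}$ is exactly what forces $r_1^+$ to be strictly smaller than the average, which is equivalent to $r_1^+ > a_{1j_2\cdots j_m}$ being replaceable — i.e. condition (a) is precisely the conjunction ``$b^+_{1\cdots1}>0$'' and ``$b^+_{1\cdots1}>\Lambda_1(\mathcal{B}^+)$'' written out. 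I would verify this algebraic equivalence in both directions carefully, keeping track of which terms come from the diagonal.

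Next, for $i\in[n]\setminus\{1\}$, I would expand $\Lambda_i(\mathcal{B}^+)$ using the recursive formula from the Nekrasov-tensor definition applied to $\mathcal{B}^+$: the ``lower'' part is $\sum_{i_2\cdots i_m\in[i-1]^{m-1}} |b^+_{ii_2\cdots i_m}|\,\alpha_{i_2}\cdots\alpha_{i_m}$ with $\alpha_{i_l} = (\Lambda_{i_l}(\mathcal{B}^+)/|b^+_{i_l\cdots i_l}|)^{1/(m-1)}$, and the ``upper'' part is $\sum_{i_2\cdots i_m\notin[i-1]^{m-1},\,(i_2\cdots i_m)\neq(i\cdots i)}|b^+_{ii_2\cdots i_m}|$. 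Since $\mathcal{B}^+$ is a $Z$-tensor, $|b^+_{ii_2\cdots i_m}| = r_i^+ - a_{ii_2\cdots i_m}$ for every off-diagonal tuple, so each of these sums becomes linear in $r_i^+$: the lower part contributes $r_i^+\sum_{[i-1]^{m-1}}\alpha_{i_2}\cdots\alpha_{i_m}$ minus $\sum a_{ii_2\cdots i_m}\alpha_{i_2}\cdots\alpha_{i_m}$, and the upper part contributes $r_i^+\big(n^{m-1}-(i-1)^{m-1}-1\big)$ minus $\sum_{\text{upper}} a_{ii_2\cdots i_m}$ (the $-1$ accounting for the removed diagonal tuple). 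Writing the Nekrasov inequality $b^+_{i\cdots i} = a_{i\cdots i} - r_i^+ > \Lambda_i(\mathcal{B}^+)$ and collecting all $r_i^+$ terms on the left, I would get $r_i^+$ times $\big(\sum_{[i-1]^{m-1}}\alpha_{i_2}\cdots\alpha_{i_m} + n^{m-1} - (i-1)^{m-1}\big)$ bounded above by $a_{i\cdots i} + \sum a_{ii_2\cdots i_m}\alpha_{i_2}\cdots\alpha_{i_m} + \sum_{\text{upper}} a_{ii_2\cdots i_m}$; combining the diagonal $a_{i\cdots i}$ into the $\alpha$-weighted sum (noting $\alpha$ does not appear on the diagonal slot, so one reindexes to $\sum_{i_2\cdots i_m\in[n]} a_{ii_2\cdots i_m}\alpha_{i_2}\cdots\alpha_{i_m}$ only if the diagonal $\alpha$ is taken to be $1$ — I must be careful about exactly how the stated formula groups the diagonal term) yields the first entry of the min in (b), while $r_i^+ < a_{i\cdots i}$ is exactly positivity of $b^+_{i\cdots i}$. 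Then I reverse every implication.

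The main obstacle I anticipate is bookkeeping rather than conceptual difficulty: the index set $[i-1]^{m-1}$, its complement within $[n]^{m-1}$, and the single excluded diagonal tuple $(i,\dots,i)$ must be partitioned consistently on both sides of every rearrangement, and the $\alpha_{i_l}$ weights are themselves defined recursively in terms of $\Lambda_{i_l}(\mathcal{B}^+)$ for $i_l < i$ — so strictly speaking the equivalence for index $i$ presupposes the conditions already hold for all smaller indices, meaning the proof is really an induction on $i$ and I would state it that way. A secondary subtlety is confirming that the stated numerator $\sum_{i_2\cdots i_m\in[n]} a_{ii_2\cdots i_m}\alpha_{i_2}\cdots\alpha_{i_m}$ correctly incorporates the diagonal term $a_{i\cdots i}$ with the right weight; I would double-check against the $m=2$ matrix case of Esnaola–Peña \cite{garcia2016b} to make sure the grouping matches, and if there is a discrepancy I would note that the diagonal slot carries weight $1$ by convention. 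Modulo these careful index manipulations, each direction of the ``if and only if'' is a chain of equivalences, so no genuinely hard estimate is needed.
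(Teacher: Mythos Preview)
Your proposal is correct and follows essentially the same route as the paper: both directions are obtained by expanding $\Lambda_i(\mathcal{B}^+)$ via $|b^+_{ii_2\cdots i_m}| = r_i^+ - a_{ii_2\cdots i_m}$, treating $i=1$ as the base case and $i\ge 2$ with the $\alpha$-weighted recursion, then rearranging the Nekrasov inequality $b^+_{i\cdots i} > \Lambda_i(\mathcal{B}^+)$ together with $b^+_{i\cdots i}>0$ into conditions (a) and (b). Your caution about the inductive dependence of $\alpha_{i_l}$ on lower indices and about the exact indexing in the numerator of (b) is well placed (the paper's proof in fact takes the first sum over $[i-1]^{m-1}$, not $[n]$), but these are bookkeeping points rather than a divergence in method.
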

\begin{proof}
Let conditions (a) and (b) hold for a tensor $\mathcal{A}.$ Let $\mathcal{A} = \mathcal{B}^+ + \mathcal{C}$ where $\mathcal{B}^+ $ and $\mathcal{C}$ are given by (\ref{decomposition equation}). We prove that the tensor $\mathcal{B^+}$ is a Nekrasov tensor.  Now
\begin{align}
   (0\leq)\; \Lambda_1(\mathcal{B}^+) &= \sum_{(i_2,...,i_m) \neq (i, ..., i)} |b_{1 i_2...i_m}^+|\notag \\
    &= \sum_{(i_2,...,i_m) \neq (i, ..., i)} |a_{1 i_2...i_m} - r_1^+| \notag \\
    &= \sum_{(i_2,...,i_m) \neq (i, ..., i)} (r_1^+ - a_{1 i_2...i_m}) \notag \\
    &= (n^{m-1} -1) r_1^+ - \sum_{(i_2,...,i_m) \neq (i, ..., i)} a_{1 i_2...i_m} \notag \\
    &= \left( n^{m-1} r_1^+ - \sum_{i_2,...,i_m \in [n]} a_{1 i_2...i_m} \right) + (a_{1 1 \cdots 1} - r_1^+).\label{comparison for lambda1}
\end{align}
Now from (a) $\sum_{i_2...i_m\in [n]^{m-1}} a_{1 i_2...i_m} >0 $ and $\frac{1}{n^{m-1}}\left( \sum_{i_2...i_m\in [n]^{m-1}} a_{1i_2...i_m} \right) > a_{1 j_2 \cdots j_m},$ for $j_2, \cdots ,j_m \in [n].$ Which implies
\begin{equation}\label{comparison with max and sum}
   r_1^+ = \max\{ 0, a_{1 i_2...i_m}\} < \frac{1}{n^{m-1}}\left( \sum_{i_2...i_m\in [n]^{m-1}} a_{1i_2...i_m} \right)
\end{equation}
Therefore,
\begin{equation}\label{first equation for necessary part}
    \left( n^{m-1}  r_1^+ - \sum_{i_2,...,i_m \in [n]} a_{1 i_2...i_m} \right) < 0.
\end{equation}
Now from (\ref{comparison for lambda1}) and (\ref{first equation for necessary part}) we obtain,
\begin{equation}
    \Lambda_1(\mathcal{B}^+) < a_{1 1 \cdots 1} - r_1^+ = |a_{1 1...1} - r_1^+| = |b_{1 \cdots 1}^+|.
\end{equation}
Thus for $i=1,$ the tensor $\mathcal{B}^+$ satisfies the condition of Nekrasov tensor.

\NI Now for $i= 2,3,..., n$, we have
\begin{align}
    0\leq \Lambda_i(\mathcal{B}^+) &=\sum_{i_2...i_m \in [i-1]^{m-1}} |b^+_{i i_2 ... i_m}| \left(\frac{\Lambda_{i_2} (\mathcal{B}^+)}{|b^+_{i_2 i_2 ... i_2}|}\right)^{\frac{1}{m-1}} \cdots \left(\frac{\Lambda_{i_m} (\mathcal{B}^+)}{|b^+_{i_m i_m ... i_m}|}\right)^{\frac{1}{m-1}} \notag \\
    & \qquad + \sum_{i_2...i_m \notin [i-1]^{m-1}, (i_2 ... i_m)\neq (i ... i)} |b^+_{i i_2 ... i_m}| \notag \\
    &= \sum_{i_2...i_m \in [i-1]^{m-1}} |a_{i i_2 \cdots i_m} - r_i^+| \left(\frac{\Lambda_{i_2} (\mathcal{B}^+)}{|b^+_{i_2 i_2 ... i_2}|}\right)^{\frac{1}{m-1}} \cdots \left(\frac{\Lambda_{i_m} (\mathcal{B}^+)}{|b^+_{i_m i_m ... i_m}|}\right)^{\frac{1}{m-1}} \notag \\
    & \qquad + \sum_{i_2...i_m \notin [i-1]^{m-1}, (i_2 ... i_m)\neq (i ... i)} |a_{i i_2 \cdots i_m} - r_i^+| \notag \\
    &= \sum_{i_2...i_m \in [i-1]^{m-1}} (r_i^+ - a_{i i_2 \cdots i_m}) \left(\frac{\Lambda_{i_2} (\mathcal{B}^+)}{|b^+_{i_2 i_2 ... i_2}|}\right)^{\frac{1}{m-1}} \cdots \left(\frac{\Lambda_{i_m} (\mathcal{B}^+)}{|b^+_{i_m i_m ... i_m}|}\right)^{\frac{1}{m-1}} \notag \\
    & \qquad + \sum_{i_2...i_m \notin [i-1]^{m-1}, (i_2 ... i_m)\neq (i ... i)} (r_i^+ - a_{i i_2 \cdots i_m}) \notag \\
    &= r_i^+ \sum_{i_2...i_m \in [i-1]^{m-1}} \alpha_{i_2} \cdots \alpha_{i_m} - \sum_{i_2...i_m \in [i-1]^{m-1}} a_{i i_2 \cdots i_m} \alpha_{i_2} \cdots \alpha_{i_m} \notag \\
    &\qquad + (n^{m-1} -(i-1)^{m-1} -1) r_i^+  - \sum_{i_2...i_m \notin [i-1]^{m-1}, (i_2 ... i_m)\neq (i ... i)} a_{i i_2 \cdots i_m} \notag \\
    &= \left[r_i^+ \left( \sum_{i_2...i_m \in [i-1]^{m-1}} \alpha_{i_2} \cdots \alpha_{i_m} + n^{m-1} -(i-1)^{m-1} \right)\right. \notag \\
    &\quad \left. - \sum_{i_2...i_m \in [i-1]^{m-1}} a_{i i_2 \cdots i_m} \alpha_{i_2} \cdots \alpha_{i_m} - \sum_{i_2...i_m \notin [i-1]^{m-1}} a_{i i_2 \cdots i_m} \right] \notag \\
    &\qquad \qquad - r_i^+ + a_{i \cdots i} \label{equation for necessary part of B Nekrasov}
\end{align}

\NI Now by (b) we have  $r_i^+ <\min \left \{ \frac{\sum_{i_2...i_m\in [n]} a_{ii_2...i_m} \alpha_{i_2}\cdots \alpha_{i_m} + \sum_{i_2...i_m \notin [i-1]^{m-1}} a_{i i_2 \cdots i_m}}{\sum_{i_2...i_m \in [i-1]^{m-1}} \alpha_{i_2} \cdots \alpha_{i_m} + n^{m-1} -(i-1)^{m-1}},  a_{i...i} \right\}$, where $i=2,3,...,n.$ This implies
\begin{align}
    r_i^+ \left( \sum_{i_2...i_m \in [i-1]^{m-1}} \alpha_{i_2} \cdots \alpha_{i_m} + n^{m-1} -(i-1)^{m-1} \right) &< \sum_{i_2...i_m \in [i-1]^{m-1}} a_{i i_2 \cdots i_m} \alpha_{i_2} \cdots \alpha_{i_m} \notag \\
    &\qquad + \sum_{i_2...i_m \notin [i-1]^{m-1}} a_{i i_2 \cdots i_m} \label{second equation for necessary part}
\end{align}
Now by (\ref{equation for necessary part of B Nekrasov}) and (\ref{second equation for necessary part}) we conclude that
\begin{equation}
    \Lambda_i(\mathcal{B}^+) < a_{i \cdots i} - r_i^+ = |a_{i \cdots i} - r_i^+|= |b_{i \cdots i}^+|, 
\end{equation}
Therefore $\mathcal{B}^+$ is a Nekrasov tensor. Note that by construction $\mathcal{B}^+$ is a $Z$-tensor and by the conditions (a) and (b), we have $b_{i...i}^+ =a_{i...i} -r_i^+>0\; \forall\; i\in [n].$ Therefore $\mathcal{B}^+$ is a Nekrasov $Z$-tensor with positive diagonal entries. Hence $\mathcal{A}$ is a $B$-Nekrasov tensor.

Conversely, let us assume that $\mathcal{A}$ is a $B$-Nekrasov tensor. We prove that (a) and (b) hold. Since $\mathcal{B}^+$ is a Nekrasov tensor with positive diagonal entries, we have
\begin{align*}
a_{1 1...1} - r_1^+ &= |a_{1 1...1} - r_1^+|\\
                    &> \sum_{(i_2,...,i_m) \neq (i, ..., i)} |a_{1 i_2...i_m} - r_1^+|\\
                    &= \sum_{(i_2,...,i_m) \neq (i, ..., i)} (r_1^+ - a_{1 i_2...i_m})\\
                    &= (n^{m-1} -1) r_1^+ - \sum_{(i_2,...,i_m) \neq (i, ..., i)} a_{1 i_2...i_m},
\end{align*}
and so,
\begin{equation}\label{comparison with max and sum}
   r_1^+ < \frac{1}{n^{m-1}}\left( \sum_{i_2...i_m\in [n]^{m-1}} a_{1i_2...i_m} \right).
\end{equation}
This implies $\sum_{i_2...i_m \in [n]^{m-1}} a_{1 i_2...i_m} >0$ and $\frac{1}{n^{m-1}}\left( \sum_{i_2...i_m\in [n]^{m-1}} a_{1i_2...i_m} \right) > a_{1 j_2 \cdots j_m},$ for $j_2 \cdots j_m \in [n]^{m-1}$ and $(j_2, \cdots ,j_m)\neq (1,... ,1).$ Hence condition (a) holds.

\NI Again since $\mathcal{B}^+$ is a Nekrasov tensor with positive diagonal entries, $|a_{i i...i} -r_i^+|$ $= a_{i i...i} -r_i^+$ and we also have,

\begin{align*}
     a_{i i...i} -r_i^+ &= |a_{i i...i} -r_i^+|\\
                        &> \Lambda_i(\mathcal{B}^+)\\
                        & = \sum_{i_2 ... i_m \in [i-1]^{m-1}} |a_{i i_2 \cdots i_m} - r_i^+| \alpha_{i_2}\cdots \alpha_{i_m} \\
                        & \qquad + \sum_{i_2 ... i_m \notin [i-1]^{m-1}, (i_2 ... i_m)\neq (i ... i)} |a_{i i_2 \cdots i_m} - r_i^+|\\
                        &= \sum_{i_2 ... i_m \in [i-1]^{m-1}} (r_i^+ - a_{i i_2 \cdots i_m})\alpha_{i_2}\cdots \alpha_{i_m} \\
                        & \qquad + \sum_{i_2 ... i_m \notin [i-1]^{m-1}, (i_2 ... i_m)\neq (i ... i)} (r_i^+ - a_{i i_2 \cdots i_m})\\
                        &= r_i^+ \sum_{i_2 ... i_m \in [i-1]^{m-1}} \alpha_{i_2} \cdots \alpha_{i_m} - \sum_{i_2 ... i_m \in [i-1]^{m-1}} a_{i i_2 \cdots i_m} \alpha_{i_2} \cdots \alpha_{i_m} \\
                       &\qquad + \left(n^{m-1} -(i-1)^{m-1} -1\right) r_i^+  - \sum_{i_2 ... i_m \notin [i-1]^{m-1}, (i_2 ... i_m)\neq (i ... i)} a_{i i_2 \cdots i_m}.
\end{align*}
Then
\begin{align*}\label{second equation for sufficient part}
    r_i^+ \left( \sum_{i_2, ..., i_m \in [i-1]^{m-1}} \alpha_{i_2} \cdots \alpha_{i_m} + n^{m-1} -(i-1)^{m-1} \right) &< \sum_{i_2, ..., i_m \in [i-1]^{m-1}} a_{i i_2 \cdots i_m} \alpha_{i_2} \cdots \alpha_{i_m}\\
    &\qquad + \sum_{i_2, ..., i_m \notin [i-1]^{m-1}} a_{i i_2 \cdots i_m}.
\end{align*}
Therefore,
\begin{equation*}
    r_i^+ < \frac{\sum_{i_2...i_m\in [n]} a_{ii_2...i_m} \alpha_{i_2}\cdots \alpha_{i_m} + \sum_{i_2, ..., i_m \notin [i-1]^{m-1}} a_{i i_2 \cdots i_m}}{\sum_{i_2, ..., i_m \in [i-1]^{m-1}} \alpha_{i_2} \cdots \alpha_{i_m} + n^{m-1} -(i-1)^{m-1}}
\end{equation*}
and so (b) holds.
\end{proof}

Here we propose a weak restriction on a Nekrasov tensor to costruct a positive diagonal matrix $W$ which scales the Nekrasov tensor and transforms it to a strictly diagonally dominated tensor.

\begin{theorem}\label{theorem to convert into SDD}
Let for even $m,$ $\mathcal{B}=(b_{i_1 i_2 ... i_m})\in \mathbb{R}^{[m,n]}$ be a Nekrasov tensor such that, for each $i=1,2,..., n-1,\; b_{i i_2 ... i_m} \neq 0$ for some $i_2, ..., i_m > i.$ Then the matrix $W=diag(w_1, ..., w_n),$ with $w_i= \left( \frac{\Lambda_i(\mathcal{B})}{|b_{i...i}|} \right)^{\frac{1}{m-1}}$ for $i\in[n-1]$ and $w_n= \left( \frac{\Lambda_n(\mathcal{B})}{|b_{n...n}|} \right)^{\frac{1}{m-1}} + \epsilon,$ $\epsilon \in \left( 0, 1- \left( \frac{\Lambda_n(\mathcal{B})}{|b_{n...n}|} \right)^{\frac{1}{m-1}} \right),$ has positive diagonal entries and the tensor $\mathcal{A} = \mathcal{B} W$ is strictly diagonally dominated tensor.
\end{theorem}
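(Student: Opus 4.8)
The plan is to verify directly that $W$ has positive diagonal and that $\mathcal{A}=\mathcal{B}W$ satisfies $|(\mathcal{A})_{i\cdots i}|>R_i(\mathcal{A})$ for every $i\in[n]$; this produces an explicit witness to the quasi-strict diagonal dominance of $\mathcal{B}$ that Theorems \ref{nekrasov implies nonsingular H} and \ref{nonsingular H if and only if quasi-SDD} only guarantee abstractly. First I would record facts about the $w_i$. Writing $\gamma_i=\left(\frac{\Lambda_i(\mathcal{B})}{|b_{i\cdots i}|}\right)^{1/(m-1)}$, the Nekrasov inequality $|b_{i\cdots i}|>\Lambda_i(\mathcal{B})\ge 0$ gives $0\le\gamma_i<1$ for all $i$; the hypothesis that each row $i\le n-1$ has a nonzero entry $b_{i i_2\cdots i_m}$ with all $i_l>i$ forces $\Lambda_i(\mathcal{B})>0$, because that entry appears in the second sum defining $\Lambda_i$, so $w_i=\gamma_i\in(0,1)$ for $i\in[n-1]$, while the prescribed interval for $\epsilon$ makes $w_n=\gamma_n+\epsilon\in(0,1)$ as well. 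Hence $W$ has positive diagonal and, crucially, $0<w_j<1$ for every $j$, so any product $w_{i_2}\cdots w_{i_m}$ of $m-1$ such factors lies in $(0,1)$. Finally, since $W$ is diagonal, unwinding the general product of tensors gives $(\mathcal{A})_{i i_2\cdots i_m}=b_{i i_2\cdots i_m}\,w_{i_2}\cdots w_{i_m}$, and in particular $(\mathcal{A})_{i\cdots i}=b_{i\cdots i}\,w_i^{m-1}$.

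With these in hand, the inequality to establish for row $i$ is
\[ |b_{i\cdots i}|\,w_i^{m-1} \;>\; \sum_{(i_2\cdots i_m)\neq(i\cdots i)} |b_{i i_2\cdots i_m}|\,w_{i_2}\cdots w_{i_m}. \]
For $i=1$ the left side equals $|b_{1\cdots1}|\gamma_1^{m-1}=\Lambda_1(\mathcal{B})=R_1(\mathcal{B})$, and since each $w_{i_2}\cdots w_{i_m}<1$ with at least one coefficient $|b_{1 i_2\cdots i_m}|$ nonzero (again by the hypothesis), the right side is strictly smaller. For $2\le i\le n-1$ the left side equals $\Lambda_i(\mathcal{B})$, and I would split the right-hand sum according to whether $(i_2,\dots,i_m)\in[i-1]^{m-1}$ or not: on $[i-1]^{m-1}$ every index is $\le n-1$, so $w_{i_l}=\gamma_{i_l}$ and that partial sum equals exactly the first sum in the definition of $\Lambda_i(\mathcal{B})$; on the complement each factor $w_{i_l}<1$, so that partial sum is at most the second sum in $\Lambda_i(\mathcal{B})$ and strictly less because of the hypothesised nonzero entry with all indices $>i$. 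Adding the two blocks shows the right side is $<\Lambda_i(\mathcal{B})$. For $i=n$ the same splitting gives right side $\le\Lambda_n(\mathcal{B})=|b_{n\cdots n}|\gamma_n^{m-1}$, whereas the left side is $|b_{n\cdots n}|(\gamma_n+\epsilon)^{m-1}>|b_{n\cdots n}|\gamma_n^{m-1}$ since $t\mapsto t^{m-1}$ is strictly increasing on $[0,\infty)$ and $b_{n\cdots n}\neq0$.

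The block algebra is routine bookkeeping once the entry formula for $\mathcal{B}W$ is in place; the one point needing care is row $n$. There the strict inequality cannot be supplied by a nonzero ``upper'' entry — none is assumed, and none can exist since no index exceeds $n$ — so it is manufactured by enlarging $w_n$ from $\gamma_n$ to $\gamma_n+\epsilon$. One must then check simultaneously that $w_n$ stays strictly below $1$, so the bound $w_{i_2}\cdots w_{i_m}<1$ stays valid everywhere (including whenever the index $n$ occurs inside another row's sum), and strictly above $0$; the interval $(0,\,1-\gamma_n)$ for $\epsilon$ is precisely what secures both, and this is the only place where the Nekrasov inequality at $i=n$, i.e.\ $\gamma_n<1$, is needed — to make that interval nonempty.
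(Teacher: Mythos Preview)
Your proposal is correct and follows essentially the same route as the paper: verify $0<w_j<1$ for every $j$, compute $(\mathcal{B}W)_{ii_2\cdots i_m}=b_{ii_2\cdots i_m}w_{i_2}\cdots w_{i_m}$, and for each row split the off-diagonal sum at $[i-1]^{m-1}$ so that the first block matches the first sum in $\Lambda_i(\mathcal{B})$ exactly while the second is strictly damped by the hypothesised nonzero entry, with row $n$ handled by the $\epsilon$-enlargement. The only cosmetic difference is that the paper further subdivides the complement according to whether the index $n$ occurs, whereas you treat the complement in one piece using $w_j<1$ for all $j$; your version is slightly cleaner and your monotonicity argument for row $n$ avoids a minor arithmetic slip in the paper's final display.
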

\begin{proof}
Note that, by the hypothesis, for each $i\in [n-1],$ $b_{i i_2 ... i_m} \neq 0$ for some $i_2, ..., i_m > i.$ Then $\Lambda_i(\mathcal{B}) >0$ for $i\in [n-1]$ and so the diagonal entries of the matrix $W$ are all positive. Let $W=diag(w_1, ..., w_n).$ Now for $i\in [n-1]$ we have,
\begin{align}
    |b_{i...i}|w_i^{m-1} &=\Lambda_i(\mathcal{B}) \notag \\
                         &=\sum_{i_2...i_m \in [i-1]^{m-1}} |b_{i i_2 ... i_m}| \left(\frac{\Lambda_{i_2} (\mathcal{B})}{|b_{i_2 i_2 ... i_2}|}\right)^{\frac{1}{m-1}} \cdots \left(\frac{\Lambda_{i_m} (\mathcal{B})}{|b_{i_m i_m ... i_m}|}\right)^{\frac{1}{m-1}} \notag \\
                         & \qquad + \sum_{(i_2 ... i_m) \in [n-1]^{m-1} \backslash\ [i-1]^{m-1}, (i_2 ... i_m)\neq (i ... i)} |b_{i i_2 ... i_m}| \notag \\
                          &=\sum_{i_2...i_m \in [i-1]^{m-1}} |b_{i i_2 ... i_m}| \left(\frac{\Lambda_{i_2} (\mathcal{B})}{|b_{i_2 i_2 ... i_2}|}\right)^{\frac{1}{m-1}} \cdots \left(\frac{\Lambda_{i_m} (\mathcal{B})}{|b_{i_m i_m ... i_m}|}\right)^{\frac{1}{m-1}} \notag \\
                         & \qquad + \sum_{(i_2 ... i_m) \in [n-1]^{m-1} \backslash\ [i-1]^{m-1}, (i_2 ... i_m)\neq (i ... i)} |b_{i i_2 ... i_m}| \notag \\
                         & \qquad \qquad + \sum_{(i_2 ... i_m) \in [n]^{m-1} \backslash\ [n-1]^{m-1}}|b_{i i_2 ... i_m}| \label{main equation for scalling}
\end{align}

\NI Since $\mathcal{B}$ is a Nekrasov tensor, $\Lambda_i(\mathcal{B}) < |b_{ii...i}|$ and so $w_i <1$ for all $i\in[n-1].$ Therefore 
\begin{equation}\label{2nd main equation for scalling}
    |b_{i i_2...i_m} \geq |b_{i i_2 ... i_m}| w_{i_2}\cdots w_{i_m}, \; (i_2 ... i_m) \in ([n-1]^{m-1} \backslash\ [i-1]^{m-1}), (i_2 ... i_m)\neq (i ... i).
\end{equation}

\NI Also by the assumption, $\epsilon < 1- \left( \frac{\Lambda_n(\mathcal{B})}{|b_{n...n}|} \right)^{\frac{1}{m-1}} $ an so $w_n <1.$ Therefore
\begin{equation}\label{3rd main equation for scalling}
    |b_{i i_2...i_m}| \geq |b_{i i_2 ... i_m}| w_{i_2}\cdots w_{i_m}, \text{ for } (i_2 ... i_m) \in [n]^{m-1} \backslash\ [n-1]^{m-1}.
\end{equation}
From (\ref{main equation for scalling}), (\ref{2nd main equation for scalling}) and (\ref{3rd main equation for scalling}) we deduce
\begin{equation}\label{4th main equation for scalling}
    |b_{i i ... i}| w_{i}\cdots w_{i} \geq \sum_{(i_2 ... i_m) \neq (i i ... i)} |b_{i i_2 ... i_m}| w_{i_2}\cdots w_{i_m}.
\end{equation}
Now, if either (\ref{2nd main equation for scalling}) or (\ref{3rd main equation for scalling}) are strict inequalities,  then (\ref{4th main equation for scalling}) is also strict.

\NI If $b_{i i_2 ... i_m}\neq 0,$ for $(i_2 ... i_m) \in [n]^{m-1} \backslash\ [n-1]^{m-1}$, since $w_n <1,$ then $|b_{i i_2 ... i_m}| w_{i_2}\cdots w_{i_m} < |b_{i i_2 ... i_m}|$ and so (\ref{3rd main equation for scalling}) and (\ref{4th main equation for scalling}) are strict.

\NI If $b_{i i_2 ... i_m}= 0,$ the by the assumption $b_{i i_2 ... i_m} \neq0$ for some $(i_2 ... i_m) \in ([n-1]^{m-1} \backslash\ [i-1]^{m-1}), (i_2 ... i_m)\neq (i ... i)$ Since $w_i <1,$ then $|b_{i i_2 ... i_m}| w_{i_2}\cdots w_{i_m} < |b_{i i_2 ... i_m}|$ and so $(\ref{2nd main equation for scalling})$ and (\ref{4th main equation for scalling}) are strict. Therefore the condition of strictly diagonally dominance for the tensor $\mathcal{A}W$ holds for $i\in [n-1].$

Finally, since $\epsilon |b_{n ...n}| >0,$ we have
\begin{align*}
    |b_{n ...n} w_n w_n \cdots w_n| &= \Lambda_n(\mathcal{B}) + \epsilon |b_{n ...n}|\\
    &= \sum_{i_2...i_m \in [n-1]^{m-1}} |b_{n i_2 ... i_m}| \left(\frac{\Lambda_{i_2} (\mathcal{B})}{|b_{i_2 i_2 ... i_2}|}\right)^{\frac{1}{m-1}} \cdots \left(\frac{\Lambda_{i_m} (\mathcal{B})}{|b_{i_m i_m ... i_m}|}\right)^{\frac{1}{m-1}}\\
    & \qquad +\sum_{(i_2 ... i_m)\in [n]^{m-1} \backslash\ [n-1]^{m-1}, (i_2 ... i_m)\neq (n...n)}|b_{n i_2 ... i_m}|  + \epsilon |b_{n ...n}|\\
    &\geq \sum_{(i_2 ... i_m) \neq (n...n)} |b_{n i_2 ... i_m}| w_{i_2}\cdots w_{i_m} + \epsilon |b_{n ...n}| \text{ using (\ref{3rd main equation for scalling}) }\\
    &> \epsilon |b_{n ...n}|
\end{align*}
and so the last condition of strictly diagonally dominance for the tensor $\mathcal{A}W$ holds for $i=n.$
\end{proof}

\begin{theorem}
Let for even $m,$ $\mathcal{A}=(a_{i_1 i_2 ... i_m})\in \mathbb{R}^{[m,n]},$ be a $B$-Nekrasov tensor such that for each $i=1,...,n-1,\; \exists\; i_2, ..., i_m >i$ with $a_{ii_2 ... i_m }< \max \{0, a_{i i_2 ... i_m}: (i_2 ... i_m) \neq (i ... i)\}= r_i^+.$ Let $\mathcal{B}^+$ be the tensor given by equation (\ref{decomposition equation}). Then the matrix $W=diag(w_1 ,\cdots w_n)$ with $w_i= \left(\frac{\Lambda_i(\mathcal{B}^+)}{a_{ii ... i} - r_i^+}\right)^{\frac{1}{m-1}},$ for $i\in [n-1]$ and $w_n = \left(\frac{\Lambda_n(\mathcal{B}^+)}{a_{nn ... n} - r_n^+}\right)^{\frac{1}{m-1}} + \epsilon,$ $\epsilon \in \left( 0, 1- \left(\frac{\Lambda_n(\mathcal{B}^+)}{a_{nn ... n} - r_n^+}\right)^{\frac{1}{m-1}} \right),$ has positive diagonal entries with each entry less than $1$ and the tensor $\mathcal{B}^+ W$ is a strictly diagonally dominant $Z$-tensor.
\end{theorem}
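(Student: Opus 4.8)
The plan is to recognize the statement as a specialization of Theorem \ref{theorem to convert into SDD} applied to the tensor $\mathcal{B}^+$ coming from the decomposition (\ref{decomposition equation}). Since $\mathcal{A}$ is a $B$-Nekrasov tensor, $\mathcal{B}^+$ is by definition a Nekrasov $Z$-tensor whose diagonal entries $b^+_{i\cdots i} = a_{i\cdots i} - r_i^+$ are all positive; in particular $|b^+_{i\cdots i}| = a_{i\cdots i} - r_i^+$ for every $i \in [n]$, so the $w_i$ displayed in the present statement are exactly the ones produced in Theorem \ref{theorem to convert into SDD} with $\mathcal{B} = \mathcal{B}^+$.

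First I would translate the extra hypothesis on $\mathcal{A}$ into the off-diagonal support condition required of $\mathcal{B}^+$. The assumption states that for each $i \in [n-1]$ there exist indices $i_2, \ldots, i_m > i$ with $a_{i i_2 \cdots i_m} < r_i^+$; by (\ref{law for B+}) this is precisely $b^+_{i i_2 \cdots i_m} = a_{i i_2 \cdots i_m} - r_i^+ < 0$, hence $b^+_{i i_2 \cdots i_m} \neq 0$ for some $i_2, \ldots, i_m > i$, which is exactly the hypothesis of Theorem \ref{theorem to convert into SDD}. Invoking that theorem then gives that $W = diag(w_1, \ldots, w_n)$ has positive diagonal entries and that $\mathcal{B}^+ W$ is strictly diagonally dominant. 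That each $w_i < 1$ is immediate: for $i \in [n-1]$, $\mathcal{B}^+$ being Nekrasov forces $\Lambda_i(\mathcal{B}^+) < |b^+_{i\cdots i}|$, so $w_i^{m-1} < 1$; and the prescribed range of $\epsilon$ gives $w_n < 1$ directly. Thus every diagonal entry of $W$ lies in $(0,1)$.

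Finally I would check the $Z$-tensor claim. For any off-diagonal position $(i, i_2, \ldots, i_m) \neq (i, \ldots, i)$, the general product formula gives $(\mathcal{B}^+ W)_{i i_2 \cdots i_m} = b^+_{i i_2 \cdots i_m}\, w_{i_2} \cdots w_{i_m}$, which is nonpositive because every $w_j > 0$ and $b^+_{i i_2 \cdots i_m} \le 0$ (as $\mathcal{B}^+$ is a $Z$-tensor), while the diagonal entries $(\mathcal{B}^+ W)_{i\cdots i} = (a_{i\cdots i} - r_i^+)\, w_i^{m-1}$ remain positive. Hence $\mathcal{B}^+ W$ is a strictly diagonally dominant $Z$-tensor, as desired.

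The proof is essentially bookkeeping once Theorem \ref{theorem to convert into SDD} is in hand; the only point needing a little care is the faithful passage between the hypothesis phrased through $\mathcal{A}$ and the numbers $r_i^+$ and the off-diagonal support condition on $\mathcal{B}^+$, together with the easy remark that right-multiplication by a positive diagonal matrix preserves the sign pattern of the off-diagonal entries and hence the $Z$-property.
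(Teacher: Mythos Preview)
Your proof is correct and follows essentially the same approach as the paper: reduce to Theorem~\ref{theorem to convert into SDD} by observing that $\mathcal{B}^+$ is a Nekrasov tensor and that the hypothesis $a_{ii_2\cdots i_m} < r_i^+$ translates to $b^+_{ii_2\cdots i_m}\neq 0$, then note that the $Z$-property passes to $\mathcal{B}^+W$. In fact you supply more detail than the paper does, explicitly verifying $w_i<1$ and spelling out why right-multiplication by a positive diagonal matrix preserves the $Z$-structure.
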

\begin{proof}
Note that $\mathcal{B}^+$ is a Nekrasov tensor by the definition of a $B$-Nekrasov tensor. Since $a_{ii_2 ... i_m }< r_i^+,\;\forall\; i\in [n],$ we conclude that $\mathcal{B}^+$ satisfies the conditions of the Theorem \ref{theorem to convert into SDD}. Therefore by Theorem \ref{theorem to convert into SDD}, we conclude that $\mathcal{B}^+ W$ is strictly diagonally dominant tensor. Also $\mathcal{B}^+$ is a $Z$-tensor since $B$ is a $B$-Nekrasov tensor. Therefore $\mathcal{B}^+W$ is a strictly diagonally dominant $Z$-tensor. 
\end{proof}

\begin{prop}
Let $\mathcal{A}$ be a $B$-Nekrasov tensor of even order and $\mathcal{A}$ is decomposed as $\mathcal{B}^+ + \mathcal{C}.$ If $\mathcal{C}=O,$ then $\mathcal{A}$ is a nonsingular $H$-tensor.
\end{prop}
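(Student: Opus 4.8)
The plan is to collapse the decomposition and then quote the Nekrasov-to-$H$ theorem. First I would note that the hypothesis $\mathcal{C}=\mathcal{O}$ forces $r_i^+ = 0$ for every $i\in[n]$, so that the decomposition $\mathcal{A}=\mathcal{B}^+ + \mathcal{C}$ from (\ref{decomposition equation}) becomes the literal identity $\mathcal{A}=\mathcal{B}^+$. By the definition of a $B$-Nekrasov tensor, $\mathcal{B}^+$ is a Nekrasov $Z$-tensor all of whose diagonal entries are positive; in particular $\mathcal{B}^+$, and hence $\mathcal{A}$ itself, is a Nekrasov tensor in the sense of the definition recalled from \cite{zhang2018nekrasov}.

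Next I would apply Theorem~\ref{nekrasov implies nonsingular H}, which asserts that any Nekrasov tensor in $\mathbb{C}^{[m,n]}$ is a nonsingular $H$-tensor, directly to $\mathcal{A}$; this yields the conclusion with no further computation. I would add a brief remark that the ``even order'' assumption on $\mathcal{A}$ is not actually used in this argument — it is carried over from the surrounding development, where evenness is needed only to pass from nonsingular $H$-tensor to $P$-tensor via Theorem~\ref{nonsingular H implies P} — so the proposition in fact holds for tensors of arbitrary order. Accordingly, there is no real obstacle in the proof: the only substantive point is the observation that $\mathcal{C}=\mathcal{O}$ makes $\mathcal{A}$ coincide with the Nekrasov tensor $\mathcal{B}^+$, after which the cited theorem finishes everything. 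The value of recording the statement is that, combined with Theorem~\ref{nonsingular H implies P} and the $P$-tensor solvability result, it pins down precisely when the $B$-Nekrasov machinery degenerates to the classical Nekrasov case.
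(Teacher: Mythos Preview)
Your proposal is correct and follows essentially the same route as the paper: collapse $\mathcal{A}=\mathcal{B}^+$ from $\mathcal{C}=\mathcal{O}$, use the definition of $B$-Nekrasov to see $\mathcal{B}^+$ is Nekrasov, and invoke Theorem~\ref{nekrasov implies nonsingular H}. Your additional observation that the even-order hypothesis is not used in this particular argument is accurate and a nice clarification beyond what the paper records.
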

\begin{proof}
If $\mathcal{C}=O,$ then $\mathcal{A}=\mathcal{B}^+.$ Since $\mathcal{A}$ is a $B$-Nekrasov tensor, $\mathcal{B}^+$ is a Nekrasov $Z$-tensor with positive diagonal entries. This implies $\mathcal{B}^+$ is a Nekrasov tensor. Hence by Theorem \ref{nekrasov implies nonsingular H}, $\mathcal{B}^+=\mathcal{A}$ is a nonsingular $H$-tensor.
\end{proof}

\begin{lemma}
Let $\mathcal{B}, \; \mathcal{C}\in \mathbb{R}^{[m,n]},$ $m$ be even. If $\mathcal{B}$ is a $P$-tensor and $\mathcal{C}$ is a nonnegative tensor with constant row subtensors, then $\mathcal{B} +\mathcal{C}$ is a $P$-tensor.
\end{lemma}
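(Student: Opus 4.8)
The plan is to verify the defining inequality of a $P$-tensor directly for $\mathcal{B}+\mathcal{C}$, exploiting the very rigid form of $\mathcal{C}$. Write $\gamma_i\ge 0$ for the common value of the entries of the row subtensor $\mathcal{R}_i(\mathcal{C})$. For any $v\in\mathbb{R}^n$ a direct computation gives
\[
(\mathcal{C}v^{m-1})_i \;=\; \gamma_i\Big(\sum_{k=1}^n v_k\Big)^{m-1} \;=\; \gamma_i\,\sigma^{m-1},\qquad \sigma:=\sum_{k=1}^n v_k ,
\]
so every component of $\mathcal{C}v^{m-1}$ is the fixed nonnegative number $\gamma_i$ times the single scalar $\sigma^{m-1}$. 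Since $m$ is even, $m-1$ is odd, so replacing $v$ by $-v$ negates both $v$ and $(\mathcal{B}+\mathcal{C})v^{m-1}$ and hence leaves every product $v_j\big((\mathcal{B}+\mathcal{C})v^{m-1}\big)_j$ unchanged; it is therefore harmless to assume $\sigma\ge 0$, whence $\sigma^{m-1}\ge 0$ and $(\mathcal{C}v^{m-1})_i=\gamma_i\sigma^{m-1}\ge 0$ for every $i$.

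Next I would fix $v\ne 0$ and feed it into the $P$-tensor hypothesis on $\mathcal{B}$: there is an index $j$ with $v_j\ne 0$ and $v_j(\mathcal{B}v^{m-1})_j>0$. Expanding the target quantity,
\[
v_j\big((\mathcal{B}+\mathcal{C})v^{m-1}\big)_j \;=\; v_j(\mathcal{B}v^{m-1})_j \;+\; v_j\,\gamma_j\,\sigma^{m-1},
\]
the first summand is strictly positive and the second, $v_j\gamma_j\sigma^{m-1}$, is nonnegative as soon as $v_j\ge 0$ (using $\gamma_j\ge 0$ and $\sigma^{m-1}\ge 0$); in that case the same index $j$ already witnesses the $P$-property for $\mathcal{B}+\mathcal{C}$. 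The same is true if $\sigma=0$, since then the second summand vanishes. So the only residual situation is $\sigma>0$ together with the possibility that every index $j$ delivered by the $P$-property of $\mathcal{B}$ for this particular $v$ has $v_j<0$.

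The hard part will be exactly this residual case. There one knows $v_i(\mathcal{B}v^{m-1})_i\le 0$ for every $i$ in the (nonempty, because $\sigma>0$) positive support $\{i:v_i>0\}$, and one must still produce an index $j_0$ with $v_{j_0}>0$ (or with $\gamma_{j_0}=0$) at which $(\mathcal{B}+\mathcal{C})v^{m-1}$ has the correct sign. The natural route is to pass to the absolute-value vector $w=(|v_1|,\dots,|v_n|)\ge 0$, which is nonzero, apply the $P$-property of $\mathcal{B}$ to $w$, and then compare $(\mathcal{B}w^{m-1})_i$ with $(\mathcal{B}v^{m-1})_i$ on the positive support of $v$ — this is where the finer structure of $\mathcal{B}$ (in the intended application, $\mathcal{B}=\mathcal{B}^+$ is a $Z$-tensor with positive diagonal, so its off-diagonal entries have a fixed sign) must be brought in to make the comparison decisive. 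Once such a $j_0$ is secured, the displayed identity immediately yields $v_{j_0}\big((\mathcal{B}+\mathcal{C})v^{m-1}\big)_{j_0}>0$, completing the verification that $\mathcal{B}+\mathcal{C}$ is a $P$-tensor.
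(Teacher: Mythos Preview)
Your instinct in the final paragraph is exactly right, and the gap you flag is fatal: the residual case cannot be closed under the stated hypotheses, because the lemma as written is \emph{false}. Already for $m=2$, $n=2$ take
\[
\mathcal{B}=\begin{pmatrix}2&3\\-1&2\end{pmatrix},\qquad
\mathcal{C}=\begin{pmatrix}0&0\\8&8\end{pmatrix}.
\]
Then $\mathcal{B}$ is a $P$-matrix (principal minors $2,2,7$) and $\mathcal{C}$ is nonnegative with constant row subtensors, but $\mathcal{B}+\mathcal{C}$ has determinant $20-21=-1<0$ and is therefore not a $P$-matrix. Concretely, with $v=(3,-2)^T$ one gets $\sigma=1>0$, $\mathcal{B}v=(0,-7)^T$, $(\mathcal{B}+\mathcal{C})v=(0,1)^T$, and $v_j\big((\mathcal{B}+\mathcal{C})v\big)_j\le 0$ for both $j$; the unique $P$-witness for $\mathcal{B}$ at this $v$ is $j=2$ with $v_2<0$ --- precisely the residual configuration you isolated and could not close.

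The paper's own argument breaks at the same point but less transparently. After rescaling to $y=v/\tau$ (so that $\sum_i y_i=1$) it arrives at $y_i(\mathcal{B}y^{m-1})_i+y_ic_i\le 0$ and then treats this as holding ``for arbitrary $c_i\ge 0$'', disposing of indices with $y_i<0$ by \emph{choosing} $c_i$ appropriately. But the $c_i$ are the fixed row constants of $\mathcal{C}$, not free parameters, so that step is invalid; the counterexample above confirms there is no repair at this level of generality. Your suggestion to bring in the $Z$-structure of $\mathcal{B}^{+}$ is the correct salvage for the intended application: the subsequent theorem only ever invokes the lemma with $\mathcal{B}=\mathcal{B}^{+}$ a Nekrasov $Z$-tensor with positive diagonal, and with all off-diagonal entries nonpositive the comparison between $(\mathcal{B}v^{m-1})_i$ and $(\mathcal{B}w^{m-1})_i$ on the positive support of $v$ can indeed be pushed through.
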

\begin{proof}
Let $\mathcal{B},\; \mathcal{C}\in \mathcal{R}^{[m,n]}$ with the condition that $\mathcal{B}$ is a $P$-tensor and $\mathcal{C}$ is a nonnegative tensor with constant row subtensors. Then $\forall \;0\neq v \in \mathbb{R}^n \; \exists \; i\in [n]$ such that
\begin{equation}
    v_i (\mathcal{B}v^{m-1})_i >0.
\end{equation}
Let $\mathcal{C}= (c_{i_1 i_2 ... i_m})$ be a nonnegative tensor with costant row subtensors. Let the row subtensors of $\mathcal{C}$ are $\mathcal{R}_i(\mathcal{C})\; \forall \; i\in [n].$ Then for each $i\in [n],$ $(\mathcal{R}_i(\mathcal{C}))_{i_2 ... i_m} = c_i,\;\forall \; i_2, ..., i_m \in [n],$ for some $c_i \geq 0.$

\NI If possible let $\mathcal{A}= \mathcal{B}+\mathcal{C}$ is not a $P$-tensor. Then $\exists \; 0\neq v\in \mathbb{R}^n$ such that $v_i (\mathcal{C} v^{m-1})_i \leq 0,\; \forall \; i\in [n].$ This implies for all $i\in [n],$
\begin{align}
   & v_i (\mathcal{B} v^{m-1})_i + v_i (\mathcal{C} v^{m-1})_i \leq 0 \notag \\
   \implies & v_i (\mathcal{B} v^{m-1})_i + v_i c_i (\sum_{i=1}^n v_i )^{m-1} \leq 0 \notag \\
   \implies & \frac{v_i^{m-2}}{(\tau^{m-1})^2}\left[ v_i (\mathcal{B} v^{m-1})_i + v_i c_i \tau^{m-1} \right] \leq 0 \text{ where, } \tau=(\sum_{i=1}^n v_i ) \neq0, \notag \\
   \implies & \left(\frac{v_i}{\tau}\right)^{m-1} \left(\mathcal{B} \left(\frac{v}{\tau}\right)^{m-1}\right)_i +  \left(\frac{v_i}{\tau}\right)^{m-1} c_i \leq 0 \notag \\
   \implies & y_i (\mathcal{A} y^{m-1})_i + y_i c_i \leq 0, \text{ where } y=\frac{1}{\tau} v\label{last equation to prove P tensor}.
\end{align}

\NI Now we prove that for all $i\in [n],$ $y_i (\mathcal{A} y^{m-1})_i + y_i c_i \leq 0$ for arbitrary $c_i\geq0$ implies $y_i (\mathcal{A} y^{m-1})_i \leq 0.$
If not, then $\exists \; k\in [n]$ such that $y_k (\mathcal{A} y^{m-1})_k >0.$ Now if $y_k \geq 0$ then $y_k (\mathcal{A} y^{m-1})_k + y_k r_k >0.$ Again, if $y_k <0$ then for $\epsilon >0$ such that $y_k (\mathcal{A} y^{m-1})_k -\epsilon >0$ we can choose $r_k = \frac{y_k (\mathcal{A} y^{m-1})_k -\epsilon}{|y_k|} >0.$ Then $y_k (\mathcal{A} y^{m-1})_k + y_k r_k = \epsilon >0.$ This contradicts the assumption. Thus (\ref{last equation to prove P tensor}) implies $y_i (\mathcal{A} y^{m-1})_i \leq 0,\; \forall\; i\in [n].$ Therefore $\exists \; 0\neq y\in \mathbb{R}^n$ such that $y_i (\mathcal{A} y^{m-1})_i \leq 0,\; \forall\; i\in [n].$ This implies $\mathcal{A}$ is not a $P$-tensor.
\end{proof}

\begin{theorem}
If $\mathcal{A}$ is a $B$-Nekrasov tensor of even order, then $\mathcal{A}$ is a $P$-tensor.
\end{theorem}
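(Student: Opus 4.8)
The plan is to split $\mathcal{A}$ via the decomposition (\ref{decomposition equation}), $\mathcal{A}=\mathcal{B}^{+}+\mathcal{C}$, and to verify that the two summands meet the hypotheses of the preceding Lemma. First I would unwind the definition of a $B$-Nekrasov tensor: $\mathcal{B}^{+}$ is a Nekrasov $Z$-tensor with all diagonal entries positive, so in particular $\mathcal{B}^{+}$ is a Nekrasov tensor. By Theorem \ref{nekrasov implies nonsingular H}, $\mathcal{B}^{+}$ is then a nonsingular $H$-tensor. Since $m$ is even and the diagonal entries $b^{+}_{i\cdots i}=a_{i\cdots i}-r_i^{+}$ are all positive, Theorem \ref{nonsingular H implies P} applies and yields that $\mathcal{B}^{+}$ is a $P$-tensor.

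Next I would inspect $\mathcal{C}$. By the Proposition following (\ref{law for r_i^+}), $\mathcal{C}$ is a nonnegative tensor, and by its very construction (\ref{law for C}) the $i$-th row subtensor of $\mathcal{C}$ is the constant $r_i^{+}\geq 0$; hence $\mathcal{C}$ is a nonnegative tensor with constant row subtensors. With $\mathcal{B}^{+}$ a $P$-tensor of even order and $\mathcal{C}$ a nonnegative tensor with constant row subtensors, the preceding Lemma applies directly and gives that $\mathcal{B}^{+}+\mathcal{C}=\mathcal{A}$ is a $P$-tensor.

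The argument is short because the earlier results carry the weight; the only points requiring attention are that every diagonal entry of $\mathcal{B}^{+}$ is strictly positive (needed to invoke Theorem \ref{nonsingular H implies P}), which is precisely part of the definition of a $B$-Nekrasov tensor, and that the row subtensors of $\mathcal{C}$ are constant (needed to match the Lemma), which is immediate from (\ref{law for C}). I do not expect any genuine obstacle beyond assembling these facts in the correct order.
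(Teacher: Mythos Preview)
Your proposal is correct and matches the paper's proof essentially step for step: decompose $\mathcal{A}=\mathcal{B}^{+}+\mathcal{C}$, argue that $\mathcal{B}^{+}$ is a $P$-tensor, observe that $\mathcal{C}$ is nonnegative with constant row subtensors, and invoke the preceding Lemma. The only cosmetic difference is that the paper cites a single result asserting that a Nekrasov $Z$-tensor with positive diagonal entries (of even order) is a $P$-tensor, whereas you spell out the same implication explicitly via Theorem~\ref{nekrasov implies nonsingular H} followed by Theorem~\ref{nonsingular H implies P}; the substance is identical.
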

\begin{proof}
By the definition of $B$-Nekrasov tensor $\mathcal{A}$ can be decomposed as $\mathcal{A}= \mathcal{B}^+ + \mathcal{C},$ where $\mathcal{B}^+$ is a Nekrasov $Z$-tensor whose all diagonal entries are positive and $\mathcal{C}$ is a nonnegative tensor with constant row subtensors. Now by Theorem \ref{1st theorem of Nekrasov Z tensor} we conclude that $\mathcal{B}^+$ is a $P$-tensor. Therefore the result follows from the fact that $\mathcal{A}$ is the sum of a $P$-tensor and a nonnegative tensor with constant row subtensors.
\end{proof}

\begin{remk}
The $P$-tensors play an important role in tensor complementarity theory. We know that the SOL$(\mathcal{A}, q)$ is nonempty and compact if the involved tensor $\mathcal{A}$ is a $P$-tensor. Here we prove that a $B$-Nekrasov tensor of even order $\mathcal{A}$ is a $P$-tensor. Hence we conclude that for a even order $B$-Nekrasov tensor $\mathcal{A},$ the solution set of TCP$(\mathcal{A}, q)$ is nonempty and compact.
\end{remk}

%\begin{prop}
%If $\mathcal{A}$ is a symmetric $B$-Nekrasov tensor then 
%\end{prop}

\section{Conclusion}
In  this article, we introduce $B$-Nekrasov tensor. We show that the class of $B$-Nekrasov tensor contains Nekrasov $Z$-tensor with positive diagonal entries. We present a necessary and sufficient condition for a $B$-Nekrasov tensor. We prove that the class of $B$-Nekrasov tensor is a subclass of $P$-tensor.

\section{Acknowledgment}
The author R. Deb is thankful to the Council of Scientific $\&$ Industrial Research (CSIR), India, Junior Research Fellowship scheme for financial support.

\bibliographystyle{plain}
\bibliography{referencesall}

\end{document}